\newtheorem{theorem}{Theorem}
\newtheorem{lemma}{Lemma}
\newtheorem{proposition}{Proposition}
\newtheorem{corollary}{Corollary}
\newtheorem{remark}{Remark}
\newtheorem{assa}{Assumption}
\newtheorem{azero}{Assumption}
\newtheorem{aone}{Assumption}
\newtheorem{atwo}{Assumption}
\newtheorem{athree}{Assumption}
\DeclareMathOperator*{\esssup}{ess\,sup}
\begin{document}

\title[A Nonlinear Evolution Equation in an Ordered Banach Space]{A Nonlinear Evolution Equation in an Ordered Banach Space, Reconsidered}
\date{}

\author{Cecil P. Gr\"unfeld$^\dag$}
\thanks{$^\dag$ "Gheorghe Mihoc - Caius Iacob" Institute of Mathematical Statistics and Applied Mathematics of the Romanian Academy, P.O.Box 1-24, Bucharest, Romania E-mail address: grunfeld@ismma.ro; grunfeld@spacescience.ro}
\begin{abstract}

Results of a previous paper [Commun. Contemp. Math., {\bf 09} (2007) 217--251] on the existence of solutions to a nonlinear evolution equation in an abstract Lebesgue space, arising from kinetic theory, are re-obtained in the more general setting of a  real ordered Banach space, with additive norm on the positive cone,  which is not necessarily a (Banach) lattice.

In addition, an easily correctable technical error in the aforementioned paper is pointed out, and  repaired.
\end{abstract}
\maketitle

\noindent \textbf{Keywords.} nonlinear evolution equation; ordered Banach space; abstract state space; positive semigroup;  kinetic equation; Boltzmann-like model.
\vskip2mm
\noindent \textbf{\bf MSC2010.} 47J35, 47H07, 82C40

\section{Introduction}\label{sone}
This note is concerned with the Cauchy problems 
\begin{equation}
\frac{d}{dt}f(t)=Q^{+}(t,f(t))-Q^{-}(t,f(t)),\quad f(0)=f_0\in X_{+}, \quad t \geq 0,  \label{3}
\end{equation}
and 
\begin{equation}
\frac{d}{dt}f(t)=Af+Q^{+}(t,f(t))-Q^{-}(t,f(t)),\quad f(0)=f_0\in X_{+},\quad t \geq 0,  \label{a1}
\end{equation}
 in a  separable, ordered  real Banach space $X$, whose positive cone, $X_{+}$, is closed and generating,  and  whose norm is additive on $X_{+}$\footnote{ In some papers, such a Banach space is called "abstract state space" (see, e.g., \cite[p. 30]{Davies}, \cite{Arl1}, \cite{Arl2}.}. In (\ref{3}) and (\ref{a1}),  $f$ is defined from  ${\mathbb  R}_{+}:=[0,\infty)$ to $X_{+}$. Here,  $Q^{+}$ and $Q^{-}$  are   (nonlinear) mappings from ${\mathbb  R}_{+}\times {\mathcal D}$ to $X_{+}$, for some  ${\mathcal D}\subset X_{+}$  dense in $X_{+}$. It is assumed that for almost all (a.a.) $t\in {\mathbb  R}_{+}$, with respect to the Lebesgue measure on ${\mathbb  R}_{+}$, the operators $Q^{\pm}(t,\cdot)$ are isotone (i.e., order preserving) from ${\mathcal D}$ to $X_{+}$. 
 Moreover, in (\ref{a1}),  $A$ is the infinitesimal generator of a $C_{0}$ group of positive linear isometries on $X_{+}$.

Under the additional assumption that   $X$ is an abstract Lebesgue space (AL-space), i.e. a Banach lattice whose lattice norm is additive on the positive cone,   problems of the form (\ref{3}) and (\ref{a1}) were studied in Ref. \cite{Gr2007}, in the case of an abstract nonlinear evolution equation   arising from collisional kinetic theory. By construction, the model introduced in \cite{Gr2007} summarizes  common monotonicity properties (with respect to the order) of  various Boltzmann-like  kinetic equations and related models, and  satisfies a   generalization of  the  so-called Povzner inequality  \cite{Pov}, \cite{Ar}.

The main results of  \cite{Gr2007}, Theorem 3.1,  and  Corollary 3.1, consisted in proving  existence and uniqueness of  solutions to the above Cauchy problems. In addition,  Ref.  \cite{Gr2007} included  applications to the Boltzmann  equation with inelastic collisions and chemical reactions, a Povzner-like model with dissipative collisions, and Smoluchowski’s coagulation equation.

The present note is based on the  observation that  the lattice condition assumed in  \cite{Gr2007} was  needed only at one point of the analysis developed therein, specifically, in the proof of   \cite[Lemma 2.1]{Gr2007}. Therefore, extending the validity of  the  lemma, by removing the lattice assumption,  could  allow generalizing   results of \cite{Gr2007}.
Here, our goal is to prove that the main statements of \cite{Gr2007}  remain valid  in  the more general setting detailed in the beginning of this section, without imposing that $X$ should be a (Banach) lattice. This may be  of interest in  applications to problems involving Banach spaces which are not necessarily  lattices, e.g., the (real) space of  self-adjoint trace class operators over a  separable Hilbert space (endowed with the trace norm, and ordered by  the natural order of bounded self-adjoint operators), encountered in quantum kinetic modeling.

Once  results analogous to those of \cite[Lemma 2.1]{Gr2007} are proved  in our new setting,  the rest of the analysis of \cite{Gr2007} can be adapted  to  obtain the generalizations of \cite[Theorem 3.1 and Corollary 3.1]{Gr2007}.   Nevertheless,  for the sake of self-containment, in the present note, we outline the entire argument behind the  above generalizations, including some  clarifications and simplifications with respect to the presentation of \cite{Gr2007}.

Here we should  emphasize the fact\footnote {not mentioned elsewhere in literature, as far as we are aware.}  that, although Theorem 3.1  in Ref. \cite{Gr2007} is true (as stated therein), the proof provided in \cite{Gr2007} for assertion (a) of the theorem is  incomplete, because of an error, fortunately,  easily correctable in the context of  Ref. \cite{Gr2007}, as is explained in the last section and in the Appendix to the current paper.

Besides this Introduction,  our note contains four more sections and the above mentioned  Appendix. In Section 2, we detail  the general setting of the analysis to be developed in the rest of the paper, and  we extend  results of \cite[Lemma 2.1]{Gr2007}. In Section 3,  we state our main results, which have similar formulations as in Ref. \cite{Gr2007}. Section 4  includes proofs.  In the last section,  Conclusions, we briefly compare the results of the present note with those of \cite{Gr2007}. Finally, we comment on the   error above referred to, leaving its correction to the "Corrigendum" placed in  Appendix.

\section{General Setting}\label{stwo}
As  mentioned in Introduction, in this paper, $X$ is a separable, ordered  real Banach space, with order and norm denoted by $\leq$ and
$\|\cdot\|$, respectively. We assume that the positive cone $X_{+}:=\{g: g\in X; 0\leq g \}$ of $X$ is closed and generating (i.e., $X=X_{+}-X_{+}$), and  the norm  $\|\cdot\|$
 is additive on  $X_{+}$, i.e.,
\begin{equation}
\left\| g+h\right\| = \left\| g\right\| +\left\| h\right\| \quad g,h \in
X_{+}.  \label{1a}
\end{equation}
The above conditions are satisfied by AL - spaces, but also by ordered Banach spaces which are not necessarily lattices,  e.g. the anti-lattice of self-adjoint trace class operators with the trace norm, mentioned in Introduction (for other examples, see \cite[p. 30 - 31]{Davies}).

By  (\ref{1a}),   $X_{+}$ satisfies the strong Levi property, i.e.,  every  increasing (i.e., $\leq $ directed)  norm-bounded positive sequence of $X_{+}$  is norm convergent (to some element of $X_{+}$, because $X_{+}$ is closed).

Recall some  usual definitions. A mapping $\Gamma:{\mathcal D} (\Gamma)\subset X\mapsto X$, with ${\mathcal D}(\Gamma)
\cap X_{+}\neq \emptyset$, is called  {\it positive} if
$0\leq \Gamma g$ for all $0\leq g \in {\mathcal D}(\Gamma)$. Further, $\Gamma:{\mathcal D}(\Gamma)\subset X\mapsto X$ is
called  {\it isotone (or monotone)} if it preserves the order, i.e.,
\[ \forall  g,h \in {\mathcal D}(\Gamma), \; 
g\leq h \implies \Gamma g\leq \Gamma h.
\]
Similar definitions are introduces for mappings between two different ordered (Banach) spaces, in particular between $X$ and ${\mathbb  R}$ (endowed with the usual order).

Property (\ref{1a}) implies that the norm is monotone,  i.e.,
\begin{equation}
0\leq h \leq g \Rightarrow   \left\| h\right\| \leq\left\| g\right\|.   \label{1amonot}
\end{equation}

We will also use the following two definitions of \cite{Gr2007}. A set $\emptyset \neq {\mathcal M}\subset X$ is called  positively saturated (p-saturated)  if, for all $h\in  {\mathcal M}$ and $g\in X_{+}$, 
\[
g\leq h \Rightarrow g\in  {\mathcal M}.
\]
An operator $\Gamma :{\mathcal D}(\Gamma )\subset X\mapsto X$ is called closed with respect to the order (o-closed) if for every increasing sequence
$\left\{ g_n\right\} \subset {\mathcal D}(\Gamma )$ converging (in symbols, $\nearrow $) to $g$ in $X$, such that  $ \Gamma g_n  \rightarrow h\in X$, as $n \rightarrow \infty$, one has $g\in{\mathcal D}(\Gamma )$ and $\Gamma g=h$.
Obviously, a closed  isotone mapping  is  also o-closed.

Recall that if the set ${\mathfrak S} \subset {\mathbb R}$ is (Lebesgue) measurable and $g:{\mathfrak S} \mapsto X_{+} $  is   Bochner integrable, then
\[
\int_{\mathfrak S}g(s)ds \in X_{+},
\]
where $ds$ is the Lebesgue measure on the real line.

In our setting,  if $g:{\mathfrak S}\mapsto X_{+} $  is   Bochner integrable, then,  in view of (\ref{1a}),
\begin{equation}
\left\| \int_{{\mathfrak S}}g(s)ds\right\| =\int_{{\mathfrak S}}\left\|
g(s)\right\| ds,  \label{bi}
\end{equation}
$\forall$ ${\mathfrak S} \subset {\mathbb  R}$ measurable (the integral of the right hand side (r.h.s.) of  (\ref{bi}) being in the sense of Lebesgue).

According to Hille's theorem \cite[Theorem 3.7.12,   p. 83 ]{HiPh}, if $\Gamma:{\mathcal D}(\Gamma)\subset X
\mapsto X$ is a closed  linear operator, ${\mathfrak S} \subset {\mathbb  R}$ is measurable,   $g:{\mathfrak S} \mapsto {\mathcal D}(\Gamma)$ is Bochner integrable, and $\Gamma g$ is also Bochner integrable, then
\begin{equation}\label{com_boc}
\Gamma \int_{\mathfrak S} g(s)ds=\int_{\mathfrak S} \Gamma g(s)ds.
\end{equation}
We denote by  $L^{1}({\mathbb R}_{+};X_{+})$ ($L^{1}_{loc}({\mathbb R}_{+};X_{+})$)  the  space of equivalent classes of Bochner integrable  (locally Bochner integrable) maps from ${\mathbb R}_{+}$ to $X_{+}$. Also, $L^{\infty}_{loc}({\mathbb R}_{+};X_{+})$ denotes the space of equivalent classes of Lebesgue measurable maps $g:{\mathbb R}_{+}\mapsto X_{+}$ such that $||g(\cdot)||$ is  locally essentially bounded on ${\mathbb R}_{+}$.

Recall that if $\left\{S^t\right\} _{t\geq 0}$ is  a $C_0$ semigroup  on $X$, then  its infinitesimal generator $G:{\mathcal D}(G) \subset X  \mapsto X$ is a  closed linear operator, with the domain ${\mathcal D}(G)$ dense in $X$. The same is true for the positive integral powers $G^k$ (defined by $G^1:= G$,    ${\mathcal D}(G^k):=\{g: g\in {\mathcal D}(G^{k-1}),\; G^{k-1}g \in {\mathcal D}(G) \}$, $G^kg:=G (G^{k-1} g)$,  $k=2,3...$).

It is known that for every  real number $t>0$, and each $k=0,1,...$, 
\begin{equation}\label{rezol1}
\int_{0}^{t}S^s g ds \in  {\mathcal D}(G^{k+1}), \quad \forall g \in {\mathcal D}(G^k)
\end{equation}
(with the notations   $G^{0}:=I$,  ${\mathcal D}(G^0):=X$, where $I$ is the identity operator on $X$).   

One also knows  that  ${\mathcal D}(G^\infty) :=\cap _{k=1}^\infty {\mathcal D}(G^k)$  is  dense in $X$. Indeed, following,  e.g., \cite[Theorem 10.3.4, p. 308] {HiPh}), let   $\varphi:{\mathbb  R}_{+} \mapsto {\mathbb  R} $, indefinitely differentiable on $(0,\infty)$, with compact support, and satisfying,
$
\int_0^\infty \varphi (t)dt=1.
$
Then  every  $g\in X$ can be approximated by  a sequence  ${g}_n\rightarrow g$ as $n\rightarrow \infty $, where
\begin{equation}
{\mathcal D} (G^\infty )\ni{g}_n:=n\int_0^\infty \varphi
(nt)S^t g dt,\quad n=1,2,... . \label{has}
\end{equation}

We recall that a { \it positive} $C_0$ semigroup on $X$ is
a $C_0$ semigroup on $X$, which leaves the cone $X_{+}$ invariant. If $G$ denotes its infinitesimal generator, then ${\mathcal D}_{+} (G^\infty) :={\mathcal D}(G^\infty) \cap X_{+}$  is dense in $X_{+}$, as can be immediately seen by choosing $\varphi  \geq 0$  and $g\in X_{+}$ in (\ref{has}).   In particular, ${\mathcal D}_{+}(G^k):={\mathcal D}(G^k)\cap X_{+}$ is dense in $X_{+}$ for all $k=1,2,...$.

The next  lemma includes  simple but useful facts stated in \cite[Lemma 2.1]{Gr2007},  which actually hold under more general conditions than those assumed in the  lattice setting of Ref \cite{Gr2007}. 
\begin{lemma} \label{lema3} Let $\{S^{t}\}_{t\geq 0}$ be a positive semigroup on $X$ with infinitesimal generator $(-G)$. Suppose that there exists some number  $\gamma > 0$ such that
	\begin{equation}
	\gamma g \leq Gg ,  \quad \forall g \in{\mathcal D}_{+}(G).
	\label{gamarez}
	\end{equation}
	Then:
	
(a) For each $g\in X_{+}$,
	\begin{equation}
	0\leq S^t g\leq \exp (-\gamma t)g, \quad \forall  t\geq0.   \label{7}
	\end{equation}
	
(b) For each $g\in X_{+}$, there exists an increasing sequence $\left\{g_n\right\} \subset {\mathcal D}_{+}(G^\infty) $, such that $g_n\nearrow g$ as
	$n\rightarrow \infty $.

(c) Let $p$ be a positive integer. If $\{g_n\} \subset  {\mathcal D}_{+}(G^p)$ is increasing and $\{G^p g_n\}$ is norm bounded, then there exists $g\in {\mathcal D}_{+}(G^p)$ such that  $G^k g_{n} \nearrow G^k g$ for all $k=0,1,.., p$.

(d) The sets ${\mathcal D}_{+}(G ^k)$, $k=1,2,...$, and  ${\mathcal D}_{+}(G^\infty)$ are
p-saturated.
\end{lemma}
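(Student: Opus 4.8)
The plan is to bootstrap everything from the single hypothesis (\ref{gamarez}), together with the strong Levi property and the closedness of $G$, never invoking a lattice structure. I would establish (a) first. Fix $g\in\mathcal{D}_{+}(G)$ and set $u(t):=\exp(\gamma t)S^{t}g$. Since $-G$ generates $\{S^{t}\}$ and $S^{t}g\in\mathcal{D}_{+}(G)$, the map $u$ is $C^{1}$ with $u'(t)=\exp(\gamma t)\bigl(\gamma S^{t}g-GS^{t}g\bigr)$, which lies in $-X_{+}$ by (\ref{gamarez}) applied to $S^{t}g$. Writing $u(t)-u(s)=\int_{s}^{t}u'(\tau)\,d\tau$ and using that a Bochner integral of a map into the closed cone $-X_{+}$ stays in $-X_{+}$, I obtain $u(t)\leq u(s)$ for $s\leq t$; in particular $\exp(\gamma t)S^{t}g\leq g$, which is (\ref{7}) on $\mathcal{D}_{+}(G)$, while $0\leq S^{t}g$ is the assumed positivity of the semigroup. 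I then extend (\ref{7}) to all of $X_{+}$ by approximating $g\in X_{+}$ with elements of the dense set $\mathcal{D}_{+}(G)$ and passing to the limit via continuity of $S^{t}$ and closedness of $X_{+}$. A consequence I will reuse is that, for $g\in X_{+}$ and $s\leq t$, $S^{t}g=S^{t-s}(S^{s}g)\leq\exp(-\gamma(t-s))S^{s}g\leq S^{s}g$, so $t\mapsto S^{t}g$ is decreasing.

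For (b) I would rewrite the smoothing family (\ref{has}), after the substitution $s=nt$, as $g_{n}=\int_{0}^{\infty}\varphi(s)\,S^{s/n}g\,ds$ with $\varphi\geq0$, $\int\varphi=1$. By (\ref{has}) each $g_{n}\in\mathcal{D}_{+}(G^{\infty})$ (positivity from $\varphi\geq0$, $g\in X_{+}$ and positivity of $S^{t}$), and $g_{n}\to g$ since $S^{s/n}g\to g$. The new point is monotonicity: as $n$ increases $s/n$ decreases, so by the decreasing property of $t\mapsto S^{t}g$ the integrand $S^{s/n}g$ increases in $n$, and integrating against $\varphi\geq0$ gives $g_{n}\leq g_{n+1}$. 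Thus $g_{n}\nearrow g$.

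The core of the lemma, and the place where \cite{Gr2007} used the lattice, is the following elementary but decisive remark, which I would isolate first: if $v\in\mathcal{D}_{+}(G)$ then (\ref{gamarez}) gives $Gv\geq\gamma v\geq0$, so $G$ maps $\mathcal{D}_{+}(G)$ into $X_{+}$. An easy induction then shows that for $v\in\mathcal{D}_{+}(G^{p})$ every $G^{k}v$ ($0\leq k\leq p$) lies in $X_{+}$ and $\gamma^{k-j}G^{j}v\leq G^{k}v$ for $j\leq k\leq p$. Granting this, (c) is immediate for the given increasing sequence $\{g_{n}\}\subset\mathcal{D}_{+}(G^{p})$: since $g_{n+1}-g_{n}\in\mathcal{D}_{+}(G^{p})$, the remark gives $G^{k}(g_{n+1}-g_{n})\geq0$, so each $\{G^{k}g_{n}\}_{n}$ is increasing; and from $\gamma^{p-k}G^{k}g_{n}\leq G^{p}g_{n}$ together with (\ref{1amonot}) and the norm bound on $\{G^{p}g_{n}\}$, each $\{G^{k}g_{n}\}_{n}$ is norm bounded. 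The strong Levi property yields norm limits $G^{k}g_{n}\nearrow h_{k}$ with $g:=h_{0}\in X_{+}$, and applying closedness of $G$ successively to the pairs $(g_{n},Gg_{n}),(Gg_{n},G^{2}g_{n}),\dots$ identifies $g\in\mathcal{D}_{+}(G^{p})$ with $G^{k}g=h_{k}$.

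For (d) I would first treat $k=1$. Set $u_{n}:=nR_{n}g$, where $R_{\lambda}:=\int_{0}^{\infty}\exp(-\lambda t)S^{t}\,dt$ is the (positive, hence isotone) resolvent of the generator, so that $Gu_{n}=n(g-u_{n})\geq0$. As in (b), $u_{n}=\int_{0}^{\infty}\exp(-s)S^{s/n}g\,ds\nearrow g$ with $u_{n}\in\mathcal{D}_{+}(G)$. If $0\leq g\leq h$ with $h\in\mathcal{D}(G)$, then $u_{n+1}-u_{n}\in\mathcal{D}_{+}(G)$ makes $\{Gu_{n}\}$ increasing, while $nR_{n}(h-g)\in\mathcal{D}_{+}(G)$ gives $Gu_{n}\leq G(nR_{n}h)=nR_{n}(Gh)$, whence $\|Gu_{n}\|\leq\|nR_{n}(Gh)\|\to\|Gh\|$ by (\ref{1amonot}); so $\{Gu_{n}\}$ is increasing and norm bounded, and the strong Levi property plus closedness of $G$ force $g\in\mathcal{D}_{+}(G)$. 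Thus $\mathcal{D}_{+}(G)$ is p-saturated. I would then induct: if $\mathcal{D}_{+}(G^{k-1})$ is p-saturated and $0\leq g\leq h\in\mathcal{D}(G^{k})$, the hypothesis gives $g\in\mathcal{D}_{+}(G^{k-1})$, the positivity remark gives $0\leq G^{k-1}g\leq G^{k-1}h\in\mathcal{D}(G)$, and the case $k=1$ applied to $G^{k-1}g$ yields $G^{k-1}g\in\mathcal{D}(G)$, i.e. $g\in\mathcal{D}_{+}(G^{k})$; finally $\mathcal{D}_{+}(G^{\infty})=\bigcap_{k}\mathcal{D}_{+}(G^{k})$ is p-saturated as an intersection. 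The only delicate points are the two convergence steps, in (c) and in the base case of (d): there the lattice (ideal) argument of \cite{Gr2007} is replaced by monotonicity of the approximants together with the strong Levi property, the forced positivity $Gv\geq0$ on $\mathcal{D}_{+}(G)$ being exactly what upgrades mere norm boundedness to monotone, hence Levi-convergent, sequences.
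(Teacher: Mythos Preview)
Your proofs of (a), (b), and (c) follow essentially the same route as the paper: the differential inequality on $\mathcal{D}_{+}(G)$ extended by density; monotonicity of the smoothed approximants via the decrease of $t\mapsto S^{t}g$ established in (a); and the iterated use of (\ref{gamarez}) to propagate positivity, monotonicity, and norm bounds from $G^{p}$ down to lower powers, concluding by the strong Levi property and the closedness of $G$.

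Your treatment of (d), however, takes a different and more elaborate path. You build a separate resolvent approximation $u_{n}=nR_{n}g$ to handle the base case $k=1$, and then induct on $k$. This is correct, but the paper dispenses with all of it: once (b) and (c) are in hand, (d) is a two-line corollary. Given $0\leq g\leq h\in\mathcal{D}_{+}(G^{k})$, part (b) supplies $g_{n}\in\mathcal{D}_{+}(G^{\infty})$ with $g_{n}\nearrow g$; since $g_{n}\leq h$ and $h-g_{n}\in\mathcal{D}_{+}(G^{k})$, your own positivity remark gives $G^{k}g_{n}\leq G^{k}h$, hence $\|G^{k}g_{n}\|\leq\|G^{k}h\|$, and (c) immediately yields $g\in\mathcal{D}_{+}(G^{k})$. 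Your resolvent argument has the minor virtue of being independent of (b), but the paper's route makes clear that (d) requires no new approximation machinery beyond what (b) and (c) already provide.
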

\begin{proof} (a) If $ g \in {\mathcal D}_{+}(G )$,
	then
$ \frac{d}{dt}S^t g = - G S^t g$. But  (\ref {gamarez}) implies $ \frac{d}{dt}S^t g \leq  -\gamma S^t g$,  which yields (\ref{7}) (in the case $ g \in {\mathcal D}_{+}(G )$).  Since $ {\mathcal D}_{+}(G )$ is dense in $X_{+}$,  it follows that for every $g\in X_{+}$,  there is some sequence ${\mathcal D}_{+}(G ) \ni{g}_n \rightarrow g$, as $n\rightarrow \infty$,  with the property  $0\leq S^t g_{n}\leq \exp (-\gamma t)g_{n}$ for all $n$. Then the limit satisfies (\ref{7}), because $ X_{+}$ is closed.

(b) Let   $\varphi$   be  positive in  (\ref{has}). Then  a simple computation starting from (\ref{has}), and   making use of (\ref{7}), implies easily $0\leq g_n  \leq g_{n+1}$ for all $n=1,2,...$. 

(c) By Levi's property, there exists $u_{p}\in X_{+}$ such that $G^p g_n\nearrow u_p$, as $\rightarrow \infty$. Observe that,  by (\ref{gamarez}) and (\ref{1amonot}), one has $\|G^k g_{n}\|\leq  \gamma^{k-p}\|G^{p}g_{n}\|$ for all $k=0,1,2,...,p$. Then,  for each $k=0,1,2,...,p$, the sequence  $\{G^k g_{n}\}$ is norm bounded. Therefore,  by Levi's property, there exists $u_{k}\in X_{+}$ such that  $u_{k,n}:=G^k g_{n} \nearrow  u_{k}$ as $n \rightarrow \infty$. For $k=0$ and $k=1$,  we have $g_{n} \nearrow  u_{0}$ and $u_{1,n}=G g_{n} \nearrow  u_{1}$, respectively, as $n \rightarrow \infty$. But $G$ is closed. Consequently, $u_{0}\in {\mathcal D}_{+}(G )$  and $G u_{0}=u_{1}$. To complete the proof of (c), we  proceed inductively, using that $G^k$ is closed, $k=2,3,...$.

(d). Let  $k=1,2,..,$  be fixed, and  $0\leq g \leq h\in {\mathcal D}_{+}(G^{k})$. By  virtue of (b),  there is a  sequence   ${\mathcal D}_{+}(G^\infty) \ni g_{n} \nearrow g \leq h$. Therefore, $G^k g_{n}$ is increasing and $\|G^k g_{n}\| \leq \|G^k h\|$ for all $k=1,2,...$. Then  (c) applies, hence $g \in{\mathcal D}_{+}(G^{k})$. 
\end{proof}

We remark that the above lemma does not use the additivity of the norm. Moreover, Levi's property is not needed in the proof of (a) and (b).
\section{Model and main result}
  In the setting detailed in the previous sections, we investigate  (\ref{3}) and (\ref{a1}), by assuming the hypotheses of the  model introduced in  Ref. \cite{Gr2007}, as follows. 
  
  I. General assumptions:

$\bullet$ The mappings ${\mathbb  R}_{+} \ni t\mapsto Q^{\pm}(t,g(t)) \in X_{+}$ are   (Lebesgue) measurable\footnote{We do not distinguish between strong and weak measurability,  because $X$ is separable.} for every measurable $ g:{\mathbb  R}_{+}\mapsto X_{+}$ which satisfies $g(t) \in {\mathcal D}$   almost everywhere (a.e.) on ${\mathbb  R}_{+}$.

$\bullet$  For a.a. $t\geq 0$, the positive operators $Q^{\pm }(t,\cdot )$ are isotone and   o-closed, and their common domain ${\mathcal D}$ is p-saturated. 

II. Specific assumptions:
\begin{assa}
There exists a linear operator $\Lambda :{\mathcal D}(\Lambda )\subset X\mapsto X$ such that $(-\Lambda )$ is the infinitesimal generator of a positive $C_0$ semigroup on $X$, with the properties   ${\mathcal D}_{+}(\Lambda)\subset {\mathcal D}$,  $Q^{\pm}(t,{\mathcal D}_{+}(\Lambda ^k))\subset{\mathcal D}_{+}(\Lambda ^{k-1})$, $t\geq 0$ a.e., $k=2,3$, and:
\end{assa} 

\begin{azero}
There exists a number $\lambda_0>0$ such that
\begin{equation}
\lambda _{0} g \leq \Lambda g, \quad \forall g\in {\mathcal D}_{+}(\Lambda).
\label{lb}
\end{equation}
\end{azero}
\begin{aone}
There exists a  positive, non-decreasing, convex function 
$a:{\mathbb R}_{+}\mapsto {\mathbb  R}_{+}$, such that for a.a.  $t\geq 0$, 
\begin{equation}
0 \leq  Q^{-}(t,g) \leq a(\left\| \Lambda g\right\|) \Lambda g,
\quad  \forall g\in {\mathcal D}_{+}(\Lambda),
\label{a2poz}
\end{equation}
and  the mapping
${\mathcal D}_{+}(\Lambda)\ni g\mapsto a(\left\| \Lambda g\right\|
)\Lambda g-Q^{-}(t,g)$ $\in X$ is {\it isotone}.
\end{aone}
\begin{atwo}
For a.a. $t\geq 0$,
\begin{equation}
\Delta (t,g):=\left\Vert \Lambda Q^{-}(t,g)\right\Vert -\left\Vert
\Lambda Q^{+}(t,g)\right\Vert\geq 0,
\quad \forall g\in {\mathcal D}_{+}(\Lambda^2),  \label{3b}
\end{equation}
and the map ${\mathcal D}_{+}(\Lambda^2)\ni g\mapsto \Delta
(t,g)\in {\mathbb  R}_{+}$ is {\it isotone}.
\end{atwo} 	
\begin{athree}
 There exist a  positive non-decreasing function  $\rho :{\mathbb  R}_{+}\mapsto
{\mathbb   R }_{+}$  and a closed, positive, linear operator $\Lambda
_{1}:{\mathcal D}(\Lambda _{1})\subset X_{+}\mapsto X_{+}$, with
${\mathcal D}_{+}(\Lambda)\subset {\mathcal D}(\Lambda
_{1})$, such that for a.a. $t\geq 0$,   
\begin{equation}\label{lambdaunu}
\left\Vert \Lambda_{1} Q^{-}(t,g)\right\Vert -\left\Vert \Lambda_{1}
Q^{+}(t,g)\right\Vert\geq 0, \quad \forall g\in {\mathcal D}_{+}(\Lambda^2),
\end{equation}
\begin{equation}
\left\Vert \Lambda^{2} Q^{+}(t,g)\right\Vert \leq \left\Vert \Lambda^{2}
Q^{-}(t,g)\right\Vert + \rho (\left\Vert \Lambda _{1}g\right\Vert
)\left\Vert \Lambda ^{2}g\right\Vert, \quad \forall g\in {\mathcal D}_{+}(\Lambda^3).  \label{povg}
\end{equation}
\end{athree}

Some comments and remarks are in order.

The convexity assumed in $(A_{1})$  implies that the function $a$  is continuous on $(0,\infty)$, and its derivative is a.e. defined,  positive and non-decreasing on  ${\mathbb R}_{+}$.   In our case, $a(0)=a(0+)$, because $a$ is positive and non-decreasing. So,  the usual representation of $a$,  in terms of its derivative, takes the form 
\begin{equation}\label{arep}
a(x)=a(0)+\int_{0}^{x} a'(\xi) d\xi, \quad x\in {\mathbb R}_{+}.
\end{equation}

By the above assumptions,  for each $k=1,2,...$, the linear operator  $\Lambda^k$ is positive, closed, and densely defined. Besides,  ${\mathcal D}_{+}(\Lambda^k)$, $k=1,2....$, and 
${\mathcal D}_{+}(\Lambda^\infty):= {\mathcal D}_{+}(\Lambda^{\infty})\cap X_{+} $   are   p-saturated, because  $\Lambda$ fulfills the conditions of Lemma  \ref{lema3}.

Note here that,  since ${\mathcal D}_{+}(\Lambda^k)$  is  p-saturated (for all $k$), we get from (\ref{a2poz})
\begin{remark}\label{rdinifty}
	For each  $k=1,2,..$,  and for a.a. $t \geq 0$, one has  $Q^{-}(t, {\mathcal D}_{+}(\Lambda^k)))$ $ \subset$ $ {\mathcal D}_{+}(\Lambda^{k-1}))$, and
\begin{equation}\label{aQ}
\Lambda^{k-1}Q^{-}(t,g) \leq a(\|\Lambda g\|)\Lambda^{k}g \quad, \forall g\in {\mathcal D}_{+}(\Lambda^k)
\end{equation}
(where $\Lambda^0=I$).	
In particular, $Q^{-}(t, {\mathcal D}_{+}(\Lambda^\infty) ) \subset {\mathcal D}_{+}(\Lambda^\infty)$ a.e. on ${\mathbb  R}_{+}$.
\end{remark}
Remark \ref{rdinifty} shows  that the inclusion conditions on $Q^{\pm}(t,{\mathcal D}_{+}(\Lambda^k))$  imposed in the beginning of this section are implicitly fulfilled by  $Q^{-}$, in the context of Assumption ($A_1$).\footnote{However,  we kept those conditions, in order to have a priori well-defined statements in Assumptions $A_2$ and $A_3$.}

The above model assumptions indicate some control on $Q^{\pm}$, in terms of powers of $\Lambda$.
Specifically, assumption ($A_1$) shows that, although  $Q^{+}$  might exhibit a highly nonlinear behavior,  it remains somehow controlled by the linear operator $\Lambda$  on  each set $\{g:\in {\mathcal D}_{+}(\Lambda): \|\Lambda g \|=constant\}$.
By ($A_{2}$) one controls  $\Delta$,  in terms of $\Lambda$ and $\Lambda^2$.  Indeed, by (\ref{3b})
for a.a. $t \geq 0$,
\begin{equation}\label{deltamarg}
0 \leq \Delta(t,g)\leq \|\Lambda Q^{-}(t,g)\|\leq a(\|\Lambda g\|)\|\Lambda^2 g\|, \quad \forall g\in {\mathcal D}_{+}(\Lambda ^2).
\end{equation}

Observe that  (\ref{lb}) implies immediately
\begin{equation} \label{541}
\left\| g\right\| \leq \lambda _0^{-1}\left\| \Lambda g\right\|,    \quad \forall g\in {\mathcal D}_{+}(\Lambda).
\end{equation}
By applying (\ref{541}) to $Q^{\pm }(t,g)$,  and using (\ref{deltamarg}), one finds
\begin{equation}
\left\| Q^{\pm }(t,g)\right\| \leq \lambda _0^{-1}\left\| \Lambda Q^{\pm
}(t,g)\right\| \leq \lambda _0^{-1}\left\| \Lambda Q^{-}(t,g)\right\|
\leq a(\left\| \Lambda g\right\| )\lambda _0^{-1}\left\| \Lambda
^2g\right\|  \label{a54}
\end{equation}
a.e on ${\mathbb  R}_+$ for all $g\in {\mathcal D}_{+}(\Lambda ^2)$.

	Property  (\ref{a54}) entails
\begin{equation}\label{qzero}
	Q^{\pm }(t,0)=0\quad and \quad \Delta (t,0)=0\quad  a.e.\quad on
\quad	{\mathbb  R}_{+}.
\end{equation}

It is useful to introduce the  following  spaces of "abstract  moments of $\Lambda$ ". For $k=1,2,... $, let $L^1_{k, loc}({\mathbb R}_{+};X_{+})$ denote the space (of equivalent classes) of  measurable maps $g:{\mathbb R}_{+} \mapsto \mathcal{D}(\Lambda^k)$, satisfying  $\Lambda^k g$ $\in$   $L^1_{loc}({\mathbb R}_{+};X_{+})$.  We  set $L^1_{0, loc}({\mathbb R}_{+};X_{+})$ $:=$  $L^1_{loc}({\mathbb R}_{+};X_{+})$ and 
$L^1_{\infty, loc}({\mathbb R}_{+};X_{+})$ $:=$ $\cap _{k=1}^{\infty} L^1_{k, loc}({\mathbb R}_{+};X_{+})$.  Also, by $L^\infty_{2, loc}({\mathbb R}_{+};X_{+})$, we denote the space  of measurable maps $g:{\mathbb R}_{+} \mapsto \mathcal{D}(\Lambda^2)$, satisfying  $\Lambda^2 g$ $\in$   $L^\infty_{loc}({\mathbb R}_{+};X_{+})$. 

We consider the above spaces, with the natural order of $X_{+}$,  (i.e., induced by the order $\leq$ of $X$).

Due to  (\ref{lb}) and (\ref{1amonot}), clearly,  $L^\infty_{2, loc}({\mathbb R}_{+};X_{+})$ $\subseteq$ $L^1_{2, loc}({\mathbb R}_{+};X_{+})$, and
\begin{remark}\label{lem1a} If  $k,p=0,1,2,..., \infty$, and $k \leq p$, then $L^1_{p, loc}({\mathbb R}_{+};X_{+})$ $\subseteq$ $L^1_{k, loc}({\mathbb R}_{+};X_{+})$. 
\end{remark}

Inequality (\ref{3b}) puts into  an abstract form common elements of various conservation/dissipation properties of  kinetic models (for details, see  Ref.  \cite{Gr2007}). In addition, (\ref{povg}) can be regarded as an abstract correspondent to the Povzner inequality \cite{Pov, Ar} (see also \cite{Gr2007}) .

Our results  are concerned with the existence and uniqueness of global (in time), strong solutions to  Eq. (\ref{3}) and   mild solutions to Eq. (\ref{a1}).

Here we say that $f$ is a (strong) solution to Eq. (\ref{3}) on ${\mathbb  R}_{+}$  if it is (strongly) absolutely continuous on ${\mathbb  R}_{+}$,  (strongly) differentiable a.e. on ${\mathbb  R}_{+}$, satisfies Eq. (\ref{3}) a.e. on ${\mathbb  R}_{+}$, and $f(0)=f_{0}$.

Note that
$f$ is a strong solution of Problem (\ref{3}) on ${\mathbb  R}_{+}$ iff
\begin{equation}
f(t)=f_0+\int_0^t [Q^{+}(s,f(s))-Q^{-}(s,f(t))]ds, \quad \forall t \geq 0 \label{3bis}
\end{equation}
(where the integral is in the sense of Bochner).

One can similarly define strong solutions to Eq. (\ref{a1}).

Let   $\{U^t\}_{t\in {\mathbb R}}$ denote the $C_{0}$ group of positive linear isometries,  defined by the the infinitesimal generator  $A$ introduced in  (\ref{a1}). Recall that  any strong solution of Eq. (\ref{a1}) satisfies
\begin{equation}
f(t)=U^tf_0+\int_0^tU^{t-s}[Q^{+}(s,f(s))-Q^{-}(s,f(s))]ds,  \quad t\geq 0,  \label{a1b}
\end{equation}
but the converse is not generally true.

Let   $ C({\mathbb  R}_{+};X_{+})$ be the space of continuous functions from ${\mathbb  R}_{+}$ to $X_{+}$.
We say that $f\in C({\mathbb  R}_{+};X_{+})$ is a  mild solution of Eq. (\ref
{a1}) on ${\mathbb  R}_{+}$, if it satisfies Eq. (\ref{a1b}) for all $t \geq 0$.

A statement similar to   \cite[Theorem 3.1]{Gr2007}  can be proved  in the more general setting of this paper.
\begin{theorem}
	\label{th1} Suppose $f_0\in {\mathcal D}_{+}(\Lambda^2)$, in  (\ref{3}).  Let either of the following hold:
	
	(a) $Q^{+}(t,{\mathcal D}_{+}(\Lambda^\infty) )$ $\subset $ ${\mathcal D}_{+}(\Lambda^\infty)
	$, $t\geq 0$ a.e., and  $\Lambda ^kQ^{+}(\cdot ,{\mathcal D}_{+}(\Lambda^\infty))$ $ \subset$ $ L^{1}_{loc}({\mathbb  R}_{+};X_{+})$, $k=1,2,...$.
	
	(b) The operators $Q^{\pm }$ do not depend explicitly on $t$.

	Then the Cauchy problem (\ref{3}) has  a unique positive  strong solution $f$ on ${\mathbb  R}_{+}$, such that $f(t)\in {\mathcal D}_{+}(\Lambda ^2)$ for all $t\geq 0 $, and $\|\Lambda^2 f(\cdot)\|$ is locally bounded on ${\mathbb R}_{+}$.
	Moreover, $\Lambda f\in C({\mathbb  R}_{+};X_{+})$. Furthermore,  $f$ satisfies 
	\begin{equation}
	\left\| \Lambda f(t)\right\|+\int_0^t\Delta
	(s,f(s))ds =\left\| \Lambda f_0\right\|, \quad \forall t\geq 0,  \label{5}
	\end{equation}
    and
	\begin{equation}
	\left\| \Lambda ^2f(t)\right\| \leq \left\| \Lambda ^2f_0\right\| \exp (\rho (\left\| \Lambda
	_1f_0\right\| )t), \quad \forall t \geq 0.
	\label{7c}
	\end{equation}
\end{theorem}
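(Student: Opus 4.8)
The plan is to pass first to the equivalent integral formulation (\ref{3bis}) and to establish the a priori bounds (\ref{5}) and (\ref{7c}) \emph{before} constructing any solution, since these estimates both characterise any solution and, crucially, confine it to a fixed order-bounded, $\Lambda^2$-controlled set on each finite interval, so that a local solution will automatically continue to a global one. Because the norm is additive on $X_{+}$ and $X_{+}$ is closed and generating, the map $g\mapsto\|g\|$ extends to a positive bounded linear functional $\phi$ on $X$ with $\phi|_{X_{+}}=\|\cdot\|$. Applying $\phi$ to $\Lambda f$ and to $\Lambda^2 f$ converts the vector identity $\frac{d}{dt}\Lambda^k f=\Lambda^k Q^{+}(t,f)-\Lambda^k Q^{-}(t,f)$ into the scalar identities $\frac{d}{dt}\|\Lambda f(t)\|=-\Delta(t,f(t))$ and $\frac{d}{dt}\|\Lambda^2 f(t)\|=\|\Lambda^2 Q^{+}\|-\|\Lambda^2 Q^{-}\|$. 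The first yields (\ref{5}) by integration and, via $\Delta\geq0$ from $(A_2)$, the monotonicity $\|\Lambda f(t)\|\leq\|\Lambda f_0\|$; the second, combined with (\ref{povg}) of $(A_3)$ and with $\|\Lambda_{1}f(t)\|\leq\|\Lambda_{1}f_0\|$ (obtained from (\ref{lambdaunu}) in the same way), gives (\ref{7c}) by Gronwall's inequality.

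Next I would construct a local solution by a monotone approximation scheme anchored on the strong Levi property of $X_{+}$. Setting $\alpha:=a(\|\Lambda f_0\|)$, the bound $\|\Lambda f(t)\|\leq\|\Lambda f_0\|$ lets me freeze the loss coefficient at $\alpha$, and I would use the positive semigroup $\{S^{\alpha t}\}$ generated by $-\alpha\Lambda$ as an integrating factor, rewriting the loss through $(A_1)$ as $\alpha\Lambda g-Q^{-}(t,g)=[a(\|\Lambda g\|)\Lambda g-Q^{-}(t,g)]+(\alpha-a(\|\Lambda g\|))\Lambda g\geq0$. The iterates $f^{(n+1)}(t)=S^{\alpha t}f_0+\int_0^t S^{\alpha(t-s)}[Q^{+}(s,f^{(n)})+\alpha\Lambda f^{(n)}-Q^{-}(s,f^{(n)})]\,ds$, started from $f^{(0)}\equiv0$ (so that $f^{(1)}=S^{\alpha t}f_0$ by (\ref{qzero})), are positive and, by the isotonicity of $Q^{+}$ and of $g\mapsto a(\|\Lambda g\|)\Lambda g-Q^{-}(t,g)$, increasing; the control (\ref{a54}) and Lemma \ref{lema3}(c) keep $\|\Lambda^2 f^{(n)}(t)\|$ bounded on $[0,T]$. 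The strong Levi property then forces $f^{(n)}(t)\nearrow f(t)$, and Lemma \ref{lema3}(c) places the limit in $\mathcal{D}_{+}(\Lambda^2)$ with $\Lambda^k f^{(n)}\nearrow\Lambda^k f$ for $k\leq2$, so $f$ solves the integral equation; o-closedness of $Q^{\pm}$ and closedness of the $\Lambda^k$ upgrade it to a strong solution and return the continuity of $\Lambda f$.

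To reach the full range $f_0\in\mathcal{D}_{+}(\Lambda^2)$ I would first run the argument for smooth data $f_0\in\mathcal{D}_{+}(\Lambda^\infty)$, where under hypothesis (a) (and under hypothesis (b) after reducing the autonomous case to it) the operators $Q^{\pm}$ preserve $\mathcal{D}_{+}(\Lambda^\infty)$ by Remark \ref{rdinifty}, so the iteration stays in the smooth class and the derivations of (\ref{5})--(\ref{7c}) above are legitimate pointwise. A general $f_0\in\mathcal{D}_{+}(\Lambda^2)$ is then approximated from below by a sequence in $\mathcal{D}_{+}(\Lambda^\infty)$ via Lemma \ref{lema3}(b), and the uniform estimates (\ref{5}) and (\ref{7c}), being independent of the regularisation, pass to the limit through the closedness of $\Lambda$ and $\Lambda^2$. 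Uniqueness I would obtain from a Gronwall argument: for two solutions with the same datum, the difference is estimated in norm using the local Lipschitz control on $Q^{\pm}$ afforded by (\ref{a54}) together with the isotonicity structure of $(A_1)$.

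The main obstacle I anticipate is reconciling the monotone iteration with the nonlinear, solution-dependent loss coefficient $a(\|\Lambda\cdot\|)$: the surplus term $(\alpha-a(\|\Lambda g\|))\Lambda g$ is positive but not, by itself, isotone, so preserving the increase of the iterates requires keeping every $f^{(n)}$ inside the set where $\|\Lambda\cdot\|\leq\|\Lambda f_0\|$, i.e.\ where freezing at $\alpha$ is legitimate; this must be threaded together with the mass identity (\ref{5}). The second delicate point is purely functional-analytic: converting the monotone norm-convergence in $X$ into convergence in the graph norm of $\Lambda^2$, so that the limit genuinely lands in $\mathcal{D}_{+}(\Lambda^2)$ and the identities (\ref{5})--(\ref{7c}) survive in the limit. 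This is precisely where Lemma \ref{lema3}(c) and the closedness of the powers $\Lambda^k$ do the decisive work.
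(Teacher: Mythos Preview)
Your outline has the right overall architecture --- freeze the loss coefficient, iterate monotonically, pass to the limit via Levi and Lemma~\ref{lema3}(c) --- but the specific iteration you propose does not produce an increasing sequence, and the obstacle you flag at the end is fatal rather than merely delicate. Confining the iterates to the set $\{\|\Lambda g\|\leq\|\Lambda f_0\|\}$ does \emph{not} make $g\mapsto \alpha\Lambda g-Q^{-}(t,g)$ isotone. A one-dimensional model already shows this: take $X=\mathbb{R}$, $\Lambda=I$, $a(x)=x$, and $Q^{-}(g)=\|\Lambda g\|\Lambda g=g^2$, so that $(A_1)$ is satisfied with the isotone map identically zero. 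Then $\alpha g-Q^{-}(g)=(\alpha-g)g$, which is not monotone on $[0,\alpha]$. Hence $f^{(1)}\leq f^{(2)}$ cannot be deduced from the isotonicity hypotheses alone, and the whole monotone scheme collapses. The paper resolves this by \emph{not} freezing the coefficient: it introduces a two-argument operator $B(t,g,h)$ in which the argument of $a$ is $\|\Lambda g(t)\|+\int_0^t\Delta(s,h(s))\,ds$. The convexity of $a$ (through $F(x,y)=a(x+y)-a(x)=\int_0^y a'(x+\xi)\,d\xi$ being increasing in both variables) then makes $B$ genuinely isotone (Lemma~\ref{lem2}), and the conservation identity~(\ref{5}) guarantees that for the true solution the modified equation coincides with the original (Proposition~\ref{equiv}). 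The iteration~(\ref{42nou}) then uses $B(s,f_{n-1},f_{n-2})$, and monotonicity follows cleanly.

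Two further points. First, your uniqueness argument appeals to a ``local Lipschitz control on $Q^{\pm}$ afforded by~(\ref{a54})'', but (\ref{a54}) is only a size bound, not a difference estimate; nothing in the hypotheses gives Lipschitz continuity of $Q^{\pm}$. The paper's uniqueness proof is order-theoretic: the iterates $f_n$ lie below \emph{any} solution $F$ with the stated properties, so the constructed $f$ is minimal, and then strict inequality $f(t_*)<F(t_*)$ would contradict the conservation law~(\ref{5}) combined with the isotonicity of $\Delta$. Second, the paper does not reduce case~(b) to case~(a); the autonomous case is handled separately by exploiting~(\ref{rezol1}) to gain one extra power of $\Lambda$ from the time integral in~(\ref{steq}), which is why only $\mathcal{M}_3$ (rather than $\mathcal{M}_\infty$) is needed there.
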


In Theorem \ref{th1}, conditions (a) and (b) do not mutually exclude each other. 

Formula (\ref{5}) generalizes a priori "conservation/ dissipation" estimates considered in, e.g., \cite{AnGr2} (For more details,  the reader is referred  to  \cite{Gr2007}).

In applications,  one may have $\Lambda_1=\Lambda $, when some  conditions of ($A_3$) become redundant (see \cite{Gr2007}).

The above theorem has an immediate consequence, with a similar statement as  \cite[Corollary 3.1]{Gr2007}.

Suppose that  $U^t{\mathcal D}(\Lambda )= {\mathcal D}(\Lambda )$ and $U^t{\mathcal D}(\Lambda _1)={\mathcal D}(\Lambda _1)$. Also assume that for each $t >0$, $U^t\Lambda
=\Lambda U^t$ on ${\mathcal D}(\Lambda )$ and 
 $U^t\Lambda_1=\Lambda _1U^t$
on ${\mathcal D}(\Lambda _1)$.
\begin{corollary}
	{\label{cor1} }Suppose $f_{0}\in {\mathcal D}_{+}(\Lambda^{2})$, in
	(\ref{a1}). Let $Q^{+}(t,  {\mathcal D}_{+}(\Lambda^\infty)) \subset
	{\mathcal D}_{+}(\Lambda^\infty)$   a.e. on ${\mathbb  R}_{+}$, and  $\Lambda ^kQ^{+}(\cdot ,U^
	{\cdot }g)$ $\in L_{loc}^1({\mathbb  R}_{+};X_{+})$ for all $g\in {\mathcal D}_{+}^
	\infty $, $k=1,2,....$.  Then the Cauchy problem (\ref{a1}) has  a unique positive global mild solution $f$ on ${\mathbb  R}_{+}$,
	such that $f(t)\in {\mathcal D}_{+}(\Lambda ^2)$ for all
	$t\geq 0$,  and $\|\Lambda^2 f(\cdot)\|$ is locally bounded on ${\mathbb R}_{+}$. Moreover,
	$\Lambda f\in C({\mathbb  R}_{+};X_{+})$. Furthermore, $f$ satisfies Eq. (\ref{5}) and
	inequality (\ref{7c}).
\end{corollary}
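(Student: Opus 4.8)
The plan is to reduce Corollary~\ref{cor1} to Theorem~\ref{th1} by passing to the ``interaction representation'' associated with the group $\{U^{t}\}$. Concretely, I would set $\tilde f(t):=U^{-t}f(t)$ and introduce the transformed operators
\[
\tilde Q^{\pm}(t,g):=U^{-t}Q^{\pm}(t,U^{t}g),
\]
defined for those $g$ with $U^{t}g$ in the domain of $Q^{\pm}(t,\cdot)$. Applying $U^{-t}$ to the mild formulation (\ref{a1b}) and using the group law $U^{-t}U^{t-s}=U^{-s}$ turns (\ref{a1b}) into
\[
\tilde f(t)=f_{0}+\int_{0}^{t}\bigl[\tilde Q^{+}(s,\tilde f(s))-\tilde Q^{-}(s,\tilde f(s))\bigr]\,ds,
\]
which is exactly the integral identity (\ref{3bis}) for the $A$-free problem (\ref{3}) with $Q^{\pm}$ replaced by $\tilde Q^{\pm}$. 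Since $U^{t}$ is a bounded linear bijection, the correspondence $f\mapsto\tilde f$ is a bijection between mild solutions of (\ref{a1}) and strong solutions of the transformed problem, preserving all the regularity figuring in the statement. Thus it suffices to verify that $\tilde Q^{\pm}$ satisfies the hypotheses of Theorem~\ref{th1}(a), apply that theorem to obtain a unique $\tilde f$, and transform back by $f:=U^{t}\tilde f$.

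The second step is to check that the model assumptions are invariant under this transformation. The corollary's hypotheses give $U^{t}\mathcal D(\Lambda)=\mathcal D(\Lambda)$, $U^{t}\mathcal D(\Lambda_{1})=\mathcal D(\Lambda_{1})$ and the commutations $U^{t}\Lambda=\Lambda U^{t}$, $U^{t}\Lambda_{1}=\Lambda_{1}U^{t}$; since $\{U^{t}\}$ is a \emph{group} of positive isometries, $U^{s}$ then commutes with every power $\Lambda^{k}$ for all $s\in\mathbb R$, maps $\mathcal D_{+}(\Lambda^{k})$ and $\mathcal D_{+}(\Lambda^{\infty})$ onto themselves, and satisfies $\|\Lambda^{j}U^{s}g\|=\|\Lambda^{j}g\|$. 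Using positivity and isotonicity of $U^{\pm t}$ one checks that $\tilde Q^{\pm}(t,\cdot)$ is again positive, isotone and o-closed with p-saturated domain, while (A$_{0}$) is untouched as it involves only $\Lambda$. For (A$_{1}$), the identity $a(\|\Lambda g\|)\Lambda g-\tilde Q^{-}(t,g)=U^{-t}\bigl[a(\|\Lambda U^{t}g\|)\Lambda U^{t}g-Q^{-}(t,U^{t}g)\bigr]$ together with $\|\Lambda U^{t}g\|=\|\Lambda g\|$ yields both (\ref{a2poz}) and the required isotonicity as a composition of isotone maps. For (A$_{2}$) and (A$_{3}$) the key point is that the isometry leaves invariant all norms in (\ref{3b}), (\ref{lambdaunu}), (\ref{povg}), so that $\tilde\Delta(t,g)=\Delta(t,U^{t}g)$ and the corresponding inequalities for $\tilde Q^{\pm}$ reduce verbatim to those for $Q^{\pm}$ evaluated at $U^{t}g$. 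Finally, $\Lambda^{k}\tilde Q^{+}(t,g)=U^{-t}\Lambda^{k}Q^{+}(t,U^{t}g)$ shows that the inclusion $\tilde Q^{+}(t,\mathcal D_{+}(\Lambda^{\infty}))\subset\mathcal D_{+}(\Lambda^{\infty})$ and the local integrability $\Lambda^{k}\tilde Q^{+}(\cdot,g)\in L^{1}_{loc}(\mathbb R_{+};X_{+})$ are precisely the conditions the corollary imposes on $Q^{+}(\cdot,U^{\cdot}g)$, so Theorem~\ref{th1}(a) applies to $\tilde Q^{\pm}$.

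With these invariance checks in hand, Theorem~\ref{th1} produces a unique positive strong solution $\tilde f$ of the transformed problem with $\tilde f(t)\in\mathcal D_{+}(\Lambda^{2})$, $\|\Lambda^{2}\tilde f(\cdot)\|$ locally bounded, $\Lambda\tilde f\in C(\mathbb R_{+};X_{+})$, and satisfying (\ref{5}), (\ref{7c}) with $\tilde\Delta$. Setting $f(t):=U^{t}\tilde f(t)$ and commuting $U^{t}$ through the Bochner integral (legitimate since $U^{t}$ is bounded and linear) recovers (\ref{a1b}), so $f$ is a mild solution; here $f$ is in general only continuous, not differentiable, which is consistent with the statement. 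The invariances $\|\Lambda^{j}f(t)\|=\|\Lambda^{j}\tilde f(t)\|$ and $\tilde\Delta(s,\tilde f(s))=\Delta(s,f(s))$ transfer the regularity, (\ref{5}) and (\ref{7c}) from $\tilde f$ to $f$, while uniqueness follows at once from the bijection of the first step.

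I expect the main obstacle to be technical rather than conceptual: carefully establishing that the composed maps $t\mapsto U^{t}g(t)$ and $t\mapsto U^{-t}h(t)$ remain strongly measurable and Bochner integrable, so that $\tilde Q^{\pm}$ meets the measurability requirement of the general assumptions and the transformed integral equation is well posed. This rests only on the joint continuity of $(t,x)\mapsto U^{t}x$ and on the isometry property, but it is the place where the group structure must be invoked most carefully; among the invariance verifications, (A$_{3}$)---which couples the simultaneous action of $\{U^{t}\}$ on $\Lambda^{2}$ and $\Lambda_{1}$---will be the most laborious to record in full.
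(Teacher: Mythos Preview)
Your proposal is correct and follows exactly the approach of the paper: the authors set $F(t):=U^{-t}f(t)$, define $Q^{\pm}_{U}(t,g):=U^{-t}Q^{\pm}(t,U^{t}g)$, observe that (\ref{a1b}) reduces to the $A$-free problem (\ref{3}) for $F$ with $Q^{\pm}$ replaced by $Q^{\pm}_{U}$, and then appeal to Theorem~\ref{th1}. Your write-up is considerably more detailed than the paper's (which leaves the verification of the hypotheses for $Q^{\pm}_{U}$ to the reader), but the strategy is identical.
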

\begin{proof} (see  \cite[Corollary 3.1]{Gr2007}). The transformation    $F(t):=U^{-t}f(t)$,  simply reduces (\ref{a1b})  to
	\begin{equation}
	\frac d{dt}F(t)=Q^{+}_{U}(t,F)-Q^{-}_{U}(t,F(t)), \quad
	F(0)=f_0,  \quad  t\geq 0, \quad a.e., \label{a2}
	\end{equation}
where  	$Q^{\pm}_{U}(t,g):=U^{-t}Q^{\pm}(t,U^t g)$  for all $g \in {\mathcal D}$,  and  a.a. $t\geq 0$.  Then it is sufficient to check that Theorem  \ref{th1} applies with $Q^{\pm}$
replaced by $Q^{\pm}_{U}$.   	
\end{proof}
\section{Technical proofs}
As in  Ref. \cite{Gr2007}, we reduce (\ref{3}) to an equivalent problem  for an equation  more suitable for monotone iteration. To this end, consider the problem
\begin{equation}
\frac d{dt}f(t)+a(\left\| \Lambda f_0\right\| )\Lambda f=B(t,f,f), \quad
f(0)=f_0\in X_{+}, \quad  t\geq 0,  \label{4}
\end{equation}
and its associated integral form
 \begin{equation}
\begin{split}
f(t & )=f_0+\int_0^t [Q^{+}(s,f(s))-Q^{-}(s,f(s))]ds
\\
+ & \int_0^t\left[ a\left( \left\| \Lambda f(s)\right\| +\int_0^s\Delta (\tau,
f(\tau))d\tau \right) -a(\left\| \Lambda f_0\right\| )\right] \Lambda
f(s)ds \quad \forall t \geq 0,
\end{split}
\label{35}
\end{equation}
where $a$  is given by ($A_1$), and $B$ is formally defined  a.e. on $t$ $\in$ $R_{+}$, by
\begin{equation}
B(t,g,h):=Q^{+}(t,g(t)) - Q^{-} (t,g(t))+a\left( \left\| \Lambda g(t)\right\| +\int_0^t\Delta
(s,h(s))ds\right) \Lambda g(t),
\label{6}
\end{equation}
for, say,  any  $g, h$ $\in$ $L^\infty_{2,loc}(\mathbb R_{+};X_{+})$.

\begin{proposition}\label{cons}
	If $f$ is a positive strong solution to (\ref{3}) such that $f(t)\in {\mathcal D}_{+}(\Lambda ^2)$ for all $t\geq 0 $, and $\|\Lambda^2 f(\cdot)\|$ is locally bounded on ${\mathbb R}_{+}$, then  $\Lambda f\in C({\mathbb  R}_{+};X_{+})$ and $f$ satisfies (\ref{5}).		
\end{proposition}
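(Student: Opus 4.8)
The plan is to apply the closed positive operator $\Lambda$ to the integral form (\ref{3bis}) of the solution, commute it with the Bochner integral by Hille's theorem (\ref{com_boc}), and then read off both assertions from the resulting identity. First I would note that $f(s)\in{\mathcal D}_{+}(\Lambda^{2})$ for every $s$ forces $Q^{\pm}(s,f(s))\in{\mathcal D}_{+}(\Lambda)$ by Assumption $(A)$, so the integrand of (\ref{3bis}) is ${\mathcal D}(\Lambda)$-valued; granting for the moment that $s\mapsto\Lambda Q^{\pm}(s,f(s))$ is locally Bochner integrable, Hille's theorem gives
\begin{equation*}
\Lambda f(t)=\Lambda f_{0}+\int_{0}^{t}\Lambda Q^{+}(s,f(s))\,ds-\int_{0}^{t}\Lambda Q^{-}(s,f(s))\,ds,\qquad t\ge 0.
\end{equation*}
Continuity of $\Lambda f$ is then immediate, since the right-hand side is a constant plus indefinite Bochner integrals of locally integrable maps, hence (absolutely) continuous; moreover $\Lambda f(t)\in X_{+}$ because $\Lambda$ is positive by $(A_{0})$. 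This settles $\Lambda f\in C({\mathbb R}_{+};X_{+})$.

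To justify the integrability, the norm estimate is handed to me by (\ref{a54}) and (\ref{541}): one has $\|\Lambda Q^{\pm}(s,f(s))\|\le a(\|\Lambda f(s)\|)\|\Lambda^{2}f(s)\|$, with $\|\Lambda f(s)\|\le\lambda_{0}^{-1}\|\Lambda^{2}f(s)\|$ locally bounded and $a$ non-decreasing and continuous, so the bound is locally bounded and integrability of the norm is clear. The hard part will be measurability of $s\mapsto\Lambda Q^{\pm}(s,f(s))$, since the standing hypotheses grant only measurability of $s\mapsto Q^{\pm}(s,f(s))$ (valid here because $f$ is continuous and ${\mathcal D}$-valued) while $\Lambda$ is unbounded. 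I would get around this with the semigroup $\{S^{t}\}$ generated by $-\Lambda$: for every $u\in{\mathcal D}(\Lambda)$ one has $\Lambda u=\lim_{n\to\infty}n\,(u-S^{1/n}u)$, whence
\begin{equation*}
\Lambda Q^{\pm}(s,f(s))=\lim_{n\to\infty}n\bigl(Q^{\pm}(s,f(s))-S^{1/n}Q^{\pm}(s,f(s))\bigr).
\end{equation*}
Each map under the limit is measurable, being a bounded operator applied to the measurable map $s\mapsto Q^{\pm}(s,f(s))$, and a pointwise limit of measurable maps into the separable space $X$ is measurable; this delivers the measurability I need.

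With $\Lambda$ inside the integral, the identity (\ref{5}) comes from an additivity trick. I would rearrange the displayed expression for $\Lambda f(t)$ so that only positive elements are added,
\begin{equation*}
\Lambda f(t)+\int_{0}^{t}\Lambda Q^{-}(s,f(s))\,ds=\Lambda f_{0}+\int_{0}^{t}\Lambda Q^{+}(s,f(s))\,ds,
\end{equation*}
where both sides lie in $X_{+}$ (each integral is that of an $X_{+}$-valued map, and $\Lambda f(t),\Lambda f_{0}\in X_{+}$). Taking norms and invoking additivity (\ref{1a}) together with (\ref{bi}) turns this into $\|\Lambda f(t)\|+\int_{0}^{t}\|\Lambda Q^{-}(s,f(s))\|\,ds=\|\Lambda f_{0}\|+\int_{0}^{t}\|\Lambda Q^{+}(s,f(s))\|\,ds$, and recalling the definition (\ref{3b}) of $\Delta$ gives precisely (\ref{5}). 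The only genuinely delicate point in the whole argument is the measurability step above; once it is in place, Hille's theorem, the additivity of the norm, and (\ref{bi}) finish the proof routinely.
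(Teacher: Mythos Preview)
Your proof is correct and follows essentially the same route as the paper: apply $\Lambda$ to the integral form (\ref{3bis}) via Hille's theorem (\ref{com_boc}), read off continuity of $\Lambda f$, then rearrange into positive summands and invoke (\ref{1a}), (\ref{bi}), and (\ref{3b}) to obtain (\ref{5}). Your explicit treatment of the measurability of $s\mapsto\Lambda Q^{\pm}(s,f(s))$ via the semigroup approximation $\Lambda u=\lim_{n}n(u-S^{1/n}u)$ is a welcome addition that the paper passes over in silence when asserting $\Lambda Q^{\pm}(\cdot,f)\in L^{1}_{loc}$.
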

\begin{proof}
a)	From (\ref{a54}) and the assumptions on $f$, we get $\Lambda f$,  $\Lambda Q^{\pm}(\cdot,f)$ $\in$ $L^1_{loc}({\mathbb R}_{+};X_{+})$. Then we simply find that  $\Lambda f\in C({\mathbb  R}_{+};X_{+})$ by applying  $\Lambda$ 	to (\ref{3bis}), and using (\ref{com_boc}). Moreover, playing conveniently with the terms in the resulting equality, we  apply (\ref{1a}) and (\ref{bi}), and, finally, take advantage of (\ref{3b}), to obtain (\ref{5}). 
\end{proof}
\begin{proposition}\label{equiv}
Let ${\mathbb R}_{+} \ni t\mapsto f(t)\in {\mathcal D}_{+}(\Lambda ^2)$ be such that $\|\Lambda^2 f(\cdot)\|$ is locally bounded on ${\mathbb R}_{+}$. Then $f$	is a strong solution to  (\ref{3}) iff it is a strong solution to (\ref{4}). 
\end{proposition}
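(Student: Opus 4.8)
The plan is to reduce both problems to their integral forms and compare them. A map $f$ with the stated regularity is a strong solution of (\ref{3}) (resp. (\ref{4})) if and only if it satisfies the integral equation (\ref{3bis}) (resp. (\ref{35})); this follows by the same integration/differentiation argument already recorded for the equivalence (\ref{3})$\Leftrightarrow$(\ref{3bis}), since (\ref{a54}) together with the local boundedness of $\|\Lambda^2 f(\cdot)\|$ guarantees that every integrand occurring in (\ref{35}) lies in $L^1_{loc}(\mathbb R_+;X_+)$. Hence it suffices to prove that (\ref{3bis}) and (\ref{35}) are equivalent. Writing
$$\varphi(s):=\|\Lambda f(s)\|+\int_0^s\Delta(\tau,f(\tau))\,d\tau,\qquad c:=\|\Lambda f_0\|,$$
the only discrepancy between (\ref{3bis}) and (\ref{35}) is the extra term $\int_0^t[a(\varphi(s))-a(c)]\Lambda f(s)\,ds$. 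Thus the two integral equations coincide once one knows $\varphi\equiv c$ (so that $a(\varphi(s))-a(c)=0$); conversely, I will show that each of them forces $\varphi\equiv c$.

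For the implication (\ref{3bis})$\Rightarrow$(\ref{35}): if $f$ satisfies (\ref{3bis}) it is a positive strong solution of (\ref{3}) with $f(t)\in\mathcal D_+(\Lambda^2)$ and $\|\Lambda^2 f(\cdot)\|$ locally bounded, so Proposition \ref{cons} applies and yields (\ref{5}), that is $\varphi\equiv c$. The extra term in (\ref{35}) then vanishes and (\ref{35}) reduces to (\ref{3bis}).

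The converse (\ref{35})$\Rightarrow$(\ref{3bis}) is the main point. Starting from (\ref{35}), I would apply $\Lambda$ to both sides; this is legitimate by Hille's theorem (\ref{com_boc}), because $f(s)\in\mathcal D_+(\Lambda^2)$ gives $\Lambda f(s)\in\mathcal D(\Lambda)$ and, by Assumption ${\bf(A)}$, $Q^\pm(s,f(s))\in\mathcal D_+(\Lambda)$, while (\ref{a54}) and the local boundedness of $\|\Lambda^2 f(\cdot)\|$ make $\Lambda Q^\pm(\cdot,f(\cdot))$ and $\Lambda^2 f(\cdot)$ Bochner integrable. This produces the identity $\Lambda f(t)=\Lambda f_0+\int_0^t[\Lambda Q^+-\Lambda Q^-]\,ds+\int_0^t[a(\varphi(s))-a(c)]\Lambda^2 f(s)\,ds$. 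Writing $a(\varphi(s))-a(c)=\psi^+(s)-\psi^-(s)$ with $\psi^\pm\ge 0$ and moving the negative contributions to the left, both sides become sums of elements of $X_+$ (here one uses that $\Lambda$ is positive, so $\Lambda Q^\pm$ and $\Lambda^2 f$ are positive); taking norms and invoking the additivity (\ref{1a}), (\ref{bi}) together with $\Delta=\|\Lambda Q^-\|-\|\Lambda Q^+\|$ collapses the vector identity into the scalar Volterra equation
$$\varphi(t)=c+\int_0^t[a(\varphi(s))-a(c)]\,\|\Lambda^2 f(s)\|\,ds,\qquad t\ge 0.$$

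It then remains to show that this scalar equation admits $\varphi\equiv c$ as its only solution. On each interval $[0,T]$ the weight $m(s):=\|\Lambda^2 f(s)\|$ is bounded by hypothesis, and (\ref{541}) bounds $\|\Lambda f(s)\|$, so $\varphi$ is bounded there, say $\varphi([0,T])\subset[0,R_T]$; since $a$ is convex, its derivative in the representation (\ref{arep}) is non-negative and non-decreasing, hence $a$ is Lipschitz on $[0,R_T]$ with some constant $L_T$. A Gronwall estimate applied to $|\varphi(t)-c|\le L_T\,(\sup_{[0,T]}m)\int_0^t|\varphi(s)-c|\,ds$ then forces $\varphi\equiv c$ on $[0,T]$, and, $T$ being arbitrary, on all of $\mathbb R_+$. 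Consequently $a(\varphi(s))-a(c)=0$, the extra term in (\ref{35}) vanishes, and (\ref{35}) becomes (\ref{3bis}). The delicate steps are the sign-splitting that lets the additive norm turn the vector identity into a scalar one, and the use of the convexity of $a$ to secure the local Lipschitz bound needed to close the Gronwall argument; everything else is the routine passage between the differential and integral forms.
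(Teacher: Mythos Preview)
Your proof is correct and follows essentially the same route as the paper: invoke Proposition~\ref{cons} for the forward direction, and for the converse apply $\Lambda$ to the integral form, use the additivity of the norm to collapse to the scalar Volterra identity for $\varphi(t)-c$, and close with the local Lipschitz bound on $a$ (from convexity) plus Gronwall. The only cosmetic difference is that the paper, rather than splitting $a(\varphi)-a(c)$ into positive and negative parts, simply places $a(\varphi(s))\Lambda^2 f(s)$ and $a(c)\Lambda^2 f(s)$ on opposite sides (both are positive since $a\ge 0$), which yields the same scalar equation with one less step.
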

\begin{proof}
Under the conditions of the proposition, if  $f$ 
is a strong solution to (\ref{3}), then it fulfills the conditions of Proposition \ref{cons}, so that,  in view of (\ref{5}), $f$ is also a strong solution to (\ref{4}). 
Conversely, suppose that $f$ is a strong solution of (\ref{4}), Then $f$  satisfies (\ref{35}), where applying (\ref{com_boc}) (with $\Gamma=\Lambda$), and writing conveniently the resulting equality, we get
\begin{equation*}
\begin{split}
\Lambda f_0&+\int_0^t \left[\Lambda Q^{+}(s,f(s)) +a\left( \left\| \Lambda f(s)\right\| +\int_0^s\Delta (\tau,
f(\tau)d\tau \right)\Lambda^{2}
f(s)\right] ds \\
= & \Lambda f(t) +\int_0^t
\left[ \Lambda Q^{-}(s,f(s)) +a(\left\| \Lambda f_0\right\|) \Lambda^{2}f(s)\right] ds, 
\end{split}
\end{equation*}
Further,   applying   (\ref {1a}) and  (\ref{bi}) in the above equality we obtain
 \begin{equation}
\psi (t)=\int_0^t\left[ a(\left\| \Lambda f_0\right\| )-a\left( \left\|
\Lambda f(s)\right\| +\int_0^s\Delta (\tau ,f(\tau))d\tau \right) \right]
\left\| \Lambda ^2f(s)\right\| ds,  \label{psi}
\end{equation}
where $\psi (t):=\left\| \Lambda f_0\right\| -\left\| \Lambda f(t)\right\|-\int_0^t\Delta (s,f(s))ds$. 
Consequently,
\begin{equation}\label{psiineq}
|\psi (t)|\leq\int_0^t\left| a(\left\| \Lambda f_0\right\| )-a\left( \left\|
\Lambda f(s)\right\| +\int_0^s\Delta (\tau ,f(\tau))d\tau \right) \right|
\left\| \Lambda ^2f(s)\right\| ds,
\end{equation}
Fix an arbitrary $0<T< \infty$. 
Since  $\|\Lambda^2 f(\cdot)\|$ is locally bounded on ${\mathbb R}_{+}$, 
by (\ref{lb}) and the positivity of $\Delta$, we can write   $\left\|\Lambda f(t)\right\| +\int_0^t\Delta (\tau ,f(\tau))d\tau \leq \alpha_{T,f}$ $\forall$ $t\in [0,T]$, where
$\alpha_{T,f}$:=$\lambda^{-1}[\esssup_{t\in[0, T]} \left\|\Lambda^2 f(t)\right\|] +\int_0^T\Delta (\tau ,f(\tau))d\tau <\infty $ depends  on  $T$ and $f$. In particular, $\|\Lambda f_{0}\| \leq \alpha_{T,f}$.
But the convexity (or, simply (\ref{arep}))  implies that $a$   is   Lipschitz  on $[0,\alpha_{T,f}]$.  Then there is  a number $\beta_{T,f}>0$ (depending on  $T$ and $f$) such that
\begin{equation}\label{lippsi}  
| a(\left\| \Lambda f_0\right\| )-a\left( \left\|
\Lambda f(s)\right\| +\int_0^s\Delta (\tau ,f(\tau))d\tau \right) 
| \leq \ \beta_{T,f} |\psi(s)|, \quad 0\leq s\leq T,
\end{equation}
which can be introduced in (\ref{psiineq}), and combined with the local boundedness of $\|\Lambda^2 f\|$, to obtain
\[
0\leq |\psi (t)|\leq \beta_{T,f} \int_0^t| \psi (s)|\left\| \Lambda ^2f(s)\right\| ds\leq
\gamma_{T,f}\int_0^t | \psi (s)|ds,  \quad \forall t \in [0, T],
\] where  $\gamma_{T,f}>0$ is also a number depending only on  $T$ and
$f$. Finally, the Gronwall's inequality yields  $\psi (t)=0$ for all $0\leq t\leq T$. This concludes the proof, because $T$ is arbitrary.
\end{proof}
Thus proving Theorem  \ref{th1} is equivalent to demonstrating the existence and uniqueness of positive solutions to (\ref{4}),   having the property (\ref{7c}). 

Before proceeding to the study of (\ref{4}) in the general case, observe that from (\ref{qzero}),  (\ref{5}),  and the properties of $\Lambda$ and $\Delta$, it results that, in the class of solutions considered in Theorem \ref{th1}, $f\equiv0$ is the only  solution of  Problem (\ref{3}) with initial datum $f_{0}=0$. 
Moreover, if  $0 \neq f_0\in {\mathcal D}_{+}(\Lambda^{2})$, $a(\|\Lambda f_0\|)=0$, and if  $f$  is strong solution of (\ref{3}), with $f(0)=f_{0}$,  and properties  as in Theorem \ref{th1},  then $\|\Lambda f(t)\|\leq \|\Lambda f_{0}\|$  on ${\mathbb R}_{+}$, due to (\ref{a54}). But $a$ is positive and non-decreasing. Therefore, $a(\|\Lambda f(t)\|)=0$ on ${\mathbb R}_{+}$.  Consequently,  (\ref{a54}) implies  $Q^{\pm}(t,f(t))\equiv 0$ a.e. on ${\mathbb R}_{+}$, which, introduced in (\ref{3}), yields  $f(t)\equiv f_{0}$ on ${\mathbb R}_{+}$.

Therefore, in the following, we  assume  $f_0 \neq 0$ and $a(\|\Lambda f_0\|)\neq0$.

The proof of Theorem \ref{th1} is close to the main argument of  \cite{Gr2007}, and relies on  the fact that any positive strong solution of  (\ref{4}) satisfies
\begin{equation} \label{6ab}
f(t)=V^tf_0+\int_0^tV^{t-s}B(s,f,f)ds, \quad t \geq 0
\end{equation}
(the integral being in the sense of Bochner), where  $\left\{ V^t\right\} _{t\geq 0}$  is the positive $C_0$ semigroup on $X$ with infinitesimal generator $L:=-a(\left\| \Lambda f_0\right\| )\Lambda$.
Thus the positive strong  solutions of  Problem (\ref{3}) can be found among the positive solutions of  Eq. (\ref{6ab}), which  satisfy (\ref{5}).

Note that $\left\{ V^t\right\} _{t\geq 0}$ fulfills  the conditions of Lemma \ref{lema3}. Then $\forall$  $g\in X_{+}$, 
\begin{equation}
0\leq V^t g\leq \exp (-\lambda_0 a(\left\| \Lambda f_0\right\|) t) g \leq g, \quad \forall t \geq 0.  \label{vt}
\end{equation}

Formally, $B(t,g,h)$ defined by  (\ref{6}) is positive. Indeed, applying (\ref{a2poz}) in  (\ref{6}), and  using  the monotonicity of $a$, and the positivity of $\Lambda$ and $\Delta$, we get 
$ 0 \leq $
$ Q^{+}(t,g(t)) +  \left( a\left(\| \Lambda g(t)\| +\int_0^t\Delta(s,h(s))ds\right) -a(\| \Lambda g(t) \|\right) \Lambda g(t)$ $ \leq B(t,g,h)$.
By  (\ref{a54}) and the   properties of $a$ (cf. ($A_1$)),  expression (\ref{6}) defines a mapping
\begin{equation} \label{Bmap}
 L^{\infty}_{2,loc}({\mathbb R}_{+}; X_{+})\times  L^{\infty}_{2,loc}({\mathbb R}_{+}; X_{+}) \ni (g,h)\mapsto B(\cdot, g,h)\in L_{1,loc}^1({\mathbb R}_{+}; X_{+}).
\end{equation}
\begin{lemma}\label{lem2}
	The mapping (\ref{Bmap}) is isotone\footnote{in the sense that if $(g_i, h_i)$  $\in$ $L^{\infty}_{2,loc}({\mathbb R}_{+}; X_{+})\times  L^{\infty}_{2,loc}({\mathbb R}_{+}; X_{+})$, $i=1,2$,
		and    $g_1(t)\leq g_2(t)$, $h_1(t)\leq h_2(t)$ a.e. on ${\mathbb R}_{+}$, then $B(t,g_1,h_1)\leq B(t,g_2,h_2)$ a.e. on ${\mathbb R}_{+}$.}.
\end{lemma}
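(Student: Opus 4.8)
The plan is to split $B$ into three pieces whose monotonicities can be handled separately, and then add them. Writing $D_h(t):=\int_0^t\Delta(s,h(s))\,ds$, I would rearrange the definition (\ref{6}) as
\begin{equation*}
B(t,g,h)=Q^{+}(t,g(t))+\bigl[a(\|\Lambda g(t)\|)\Lambda g(t)-Q^{-}(t,g(t))\bigr]+\bigl[a(\|\Lambda g(t)\|+D_h(t))-a(\|\Lambda g(t)\|)\bigr]\Lambda g(t).
\end{equation*}
The first summand is isotone in $g$ because $Q^{+}(t,\cdot)$ is isotone for a.a. $t$, and the second is isotone in $g$ precisely by the second clause of Assumption ($A_1$). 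It then remains only to prove that the third summand is isotone in the pair $(g,h)$, after which I would add the three inequalities (each holding for a.a. $t$, so their conjunction holds a.e. as well).

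For the third summand, assume $g_1(t)\le g_2(t)$ and $h_1(t)\le h_2(t)$ a.e., and first collect the elementary monotonicities. Since $\Lambda$ is a positive linear operator it is isotone, so $0\le\Lambda g_1(t)\le\Lambda g_2(t)$, and by (\ref{1amonot}) the scalars $x_i(t):=\|\Lambda g_i(t)\|$ satisfy $x_1(t)\le x_2(t)$. Next, because $\Delta(s,\cdot)$ is isotone (Assumption ($A_2$)) and $h_1(s)\le h_2(s)$ a.e., one has $\Delta(s,h_1(s))\le\Delta(s,h_2(s))$ a.e. in $s$; integrating this inequality between real functions yields $\delta_i(t):=D_{h_i}(t)\ge0$ with $\delta_1(t)\le\delta_2(t)$ for every $t\ge0$.

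The heart of the matter is the scalar increment $\phi_\delta(x):=a(x+\delta)-a(x)$, and this is the one place where the convexity of $a$, rather than mere monotonicity, is indispensable. For $\delta\ge0$ one has $\phi_\delta(x)\ge0$ because $a$ is non-decreasing, while the representation (\ref{arep}) together with the non-decrease of $a'$ shows that $x\mapsto\phi_\delta(x)$ is non-decreasing; clearly $\delta\mapsto\phi_\delta(x)$ is non-decreasing too. Setting $c_i(t):=\phi_{\delta_i(t)}(x_i(t))$, these two facts chain to $0\le c_1(t)\le\phi_{\delta_2(t)}(x_1(t))\le c_2(t)$. I would then pass back to the cone through
\begin{equation*}
c_1(t)\,\Lambda g_1(t)\le c_2(t)\,\Lambda g_1(t)\le c_2(t)\,\Lambda g_2(t),
\end{equation*}
the first step using $(c_2(t)-c_1(t))\Lambda g_1(t)\ge0$ and the second $c_2(t)\bigl(\Lambda g_2(t)-\Lambda g_1(t)\bigr)\ge0$. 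This establishes the isotonicity of the third summand, and adding the three gives the claim.

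The only genuine obstacle is this third term: its monotonicity is not handed over by any single hypothesis but must be assembled from the isotonicity of $\Lambda$ and of $\Delta$, the monotonicity of the norm, and---crucially---the convexity of $a$, which is exactly what forces the increment $a(x+\delta)-a(x)$ to grow with $x$. Measurability and the membership $B(\cdot,g_i,h_i)\in L^1_{1,loc}$ are already furnished by (\ref{Bmap}), so only the order relation has to be argued here.
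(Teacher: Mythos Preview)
Your proof is correct and follows essentially the same approach as the paper's own proof. The paper also rewrites $B$ with the additive increment $F(x,y):=a(x+y)-a(x)$ (your $\phi_\delta(x)$) and uses the integral representation (\ref{arep}) together with the non-decrease of $a'$ to get joint monotonicity of $F$ in both arguments; the remaining two pieces are handled, as you do, by the isotonicity of $Q^{+}$ and by the isotonicity clause of Assumption~($A_1$).
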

\begin{proof} (see \cite[Lemma 4.1] {Gr2007})  If  $g,h$ $\in$ 	$ L^{\infty}_{2,loc}({\mathbb R}_{+}; X_{+})$, then $B(t,g,h)$ given by  (\ref{6}) can be  written as
\begin{equation*}
\begin{split}
&B(t,g,h)
\\
&=Q^{+}(t,g(t))+a(\|\Lambda g(t)\|) \Lambda g(t) 
-Q^{-}(t,g(t)) 
+F(\| \Lambda g(t) \|, \chi(t,h)) \Lambda g(t),
\end{split}
\end{equation*}	
where   $F(x,y)$ $:=$ $a(x+y)-a(x)$, $x,y \geq 0$,   and  $\chi(t,h)$ $:=$ $\int_0^t\Delta (s,h(s))ds$.  
By (\ref{arep}), $F(x,y)=\int_0^y a'(x+\xi)d\xi$. Recall that  $a'$ is a.e. defined,  positive and non-decreasing  on ${\mathbb R}_{+}$. Thus  $F(x_2,y_2)$ $\geq$ $F(x_1,y_1)\geq 0$,  $\forall$ $x_2\geq x_1\geq 0$ and $y_2\geq y_1\geq0$. This and the isotonicity properties of  $\Lambda$  and $\Delta$  imply that $(g,h) \mapsto F(\| \Lambda g \|, \chi(\cdot,h)) \Lambda g$ is isotone from   $L^{\infty}_{2,loc}({\mathbb R}_{+}; X_{+}) \times L^{\infty}_{2,loc}({\mathbb R}_{+}; X_{+})$ to $L_{1,loc}^1({\mathbb R}_{+}; X_{+})$.
To complete the proof, it is sufficient to remark that $g\mapsto Q^{+}(\cdot, g)+a(\|\Lambda g\|) \Lambda g-Q^{-}(t,g)$ is isotone from   $L^{\infty}_{2,loc}({\mathbb R}_{+}; X_{+})$ to $L_{1,loc}^1({\mathbb R}_{+}; X_{+})$, by virtue of the isotonicity  of $Q^{+}$ and assumption $(A_1)$.
\end{proof}
It appears that  Eq. (\ref{6ab}) could be solved  by  monotone iteration,  Levi's property being useful  to prove the convergence of the iteration. To this end, we introduce a sequence of approximation  solutions to (\ref{6ab}), following ideas of \cite{Gr2007}.\footnote {We proceed as in Step 1 of the proof of  \cite[Theorem 3.1]{Gr2007}, with the difference that, here,  we also approximate the initial datum $f_0$ of (\ref{6ab}).} Specifically, for $f_0\in {\mathcal D}_{+}(\Lambda^2)$ in   (\ref{6ab}), we apply Lemma \ref{lema3}(b) and  choose an increasing sequence $ {\mathcal D}_{+}(\Lambda^\infty) \ni f_{0,n}\nearrow f_0$, as $n\rightarrow \infty $, where the first term of the sequence is  $f_{0,1}=0$. 
Then our approximating sequence is formally given by
\begin{equation}
\begin{split}
f_{1}(t) & =0,\quad f_{2}(t)=V^tf_{0,2},
\\
f_{n}(t) &=V^tf_{0,n}+S(t,f_{n-1},f_{n-2}), \quad t \geq 0; \quad
n=3,4,... .
\end{split}
\label{42nou}
\end{equation}
where
\begin{equation}\label{steq}
S(t,g,h):=\int_0^tV^{t-s}B(s,g,h)ds, \quad t\geq 0.
\end{equation}
From (\ref{Bmap})   and the presence of the integral in the r.h.s of (\ref{steq}), we have
\begin{remark}\label{contS}
 If  $g$, $h$ $\in$ $L^{\infty}_{2,loc}({\mathbb R}_{+}; X_{+})$ then $S(\cdot,g,h)\in C({\mathbb R}_{+};X_{+})$.
\end{remark}

The next three  lemmas serve to show that $f_{n}$ is well defined  and has useful integrability and regularity properties.

For each $k=1,2,...,\infty$, let ${\mathcal M}_k$ be the family of those $g$ $\in$ $C({\mathbb R}_{+};X_{+})$ with the  property that $\forall$ $0<T<\infty$, there is $g_{T}$ $\in$ ${\mathcal D}_{+}(\Lambda^k)$, which may depend only on $g$ and $T$, such that $g(t)$ $\leq$ $g_{T}$ on $[0,T]$.  Obviously, if $k, p=1,2, ,.., \infty$, and $k \leq p$, then   ${\mathcal{M}}_p$ $\subset$ $L_{p,loc}^1({\mathbb R}_{+};X_{+})$ $ \subseteq$ $L_{k,loc}^1({\mathbb R}_{+};X_{+})$. In particular, ${\mathcal M}_\infty$  $ \subset$$L_{\infty,loc}^1({\mathbb R}_{+};X_{+})$.

\begin{lemma}\label{gt}
	
	(a) Under conditions (a) of Theorem \ref{th1}, $S(\cdot,{\mathcal M}_\infty,{\mathcal M}_\infty)$ $\subset$  ${\mathcal M}_\infty$.
	
	(b) Under conditions (b) of Theorem \ref{th1}, $S(\cdot,{\mathcal M}_3,{\mathcal M}_3)$ $\subset$  ${\mathcal M}_3$.
\end{lemma}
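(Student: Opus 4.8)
The plan is to verify, for $g,h$ in $\mathcal{M}_\infty$ (resp.\ $\mathcal{M}_3$), the two defining requirements of membership of $S(\cdot,g,h)$ in $\mathcal{M}_\infty$ (resp.\ $\mathcal{M}_3$) separately. Continuity of $S(\cdot,g,h)$ is already supplied by Remark~\ref{contS}, once one observes that $\mathcal{M}_\infty,\mathcal{M}_3\subset L^\infty_{2,loc}(\mathbb{R}_+;X_+)$: if $g\in\mathcal{M}_3$ and $g(t)\le g_T\in\mathcal D_+(\Lambda^3)$ on $[0,T]$, then p-saturation of $\mathcal D_+(\Lambda^2)$ gives $g(t)\in\mathcal D_+(\Lambda^2)$, while $\Lambda^2 g(t)\le\Lambda^2 g_T$ and (\ref{1amonot}) give $\|\Lambda^2 g(t)\|\le\|\Lambda^2 g_T\|$. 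It then remains, for each fixed $0<T<\infty$, to produce a single majorant $u_T$ lying in $\mathcal D_+(\Lambda^\infty)$ (resp.\ $\mathcal D_+(\Lambda^3)$) with $S(t,g,h)\le u_T$ for all $t\in[0,T]$. The whole argument reduces to constructing $u_T$.

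A common first step is to dominate the integrand. Fix $T$, choose $g_T,h_T\in\mathcal D_+(\Lambda^k)$ (with $k=\infty$ or $3$) majorizing $g,h$ on $[0,T]$; by p-saturation $g(s),h(s)$ lie in $\mathcal D_+(\Lambda^k)$ too. Using the decomposition from Lemma~\ref{lem2}, $B(s,g,h)=\bigl[Q^{+}(s,g(s))+a(\|\Lambda g(s)\|)\Lambda g(s)-Q^{-}(s,g(s))\bigr]+F(\|\Lambda g(s)\|,\chi(s,h))\Lambda g(s)$ with $F(x,y)=a(x+y)-a(x)$ and $\chi(s,h)=\int_0^s\Delta(\tau,h(\tau))d\tau$, the isotonicity of $Q^{+}$, of $g\mapsto a(\|\Lambda g\|)\Lambda g-Q^{-}(t,g)$ (Assumption $(A_1)$) and of $(x,y)\mapsto F(x,y)$, together with the uniform bound $\chi(s,h)\le C_T:=\int_0^T\Delta(\tau,h_T)d\tau<\infty$ (finite by (\ref{deltamarg})), yield $B(s,g,h)\le w_T(s)$, where $w_T(s)$ is obtained by substituting $g_T$ for $g(s)$ and $C_T$ for $\chi(s,h)$; note $w_T(s)\ge0$ by (\ref{a2poz}).

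For part (a) everything is $\mathcal D_+(\Lambda^\infty)$-valued: by condition (a), Remark~\ref{rdinifty} and p-saturation, $B(s,g,h)\in\mathcal D_+(\Lambda^\infty)$ for a.a.\ $s$. I would then check that $s\mapsto\Lambda^{k}B(s,g,h)$ is Bochner integrable on $[0,T]$ for every $k$: the term $\Lambda^k Q^{+}(s,g(s))\le\Lambda^k Q^{+}(s,g_T)$ has norm dominated by the $L^1_{loc}$ function $\|\Lambda^k Q^{+}(s,g_T)\|$ furnished by condition (a) applied to the constant element $g_T$, while $\Lambda^k Q^{-}(s,g(s))$ and $a(\cdots)\Lambda^{k+1}g(s)$ are dominated by the constants $a(\|\Lambda g_T\|)\|\Lambda^{k+1}g_T\|$ and $a(\|\Lambda g_T\|+C_T)\|\Lambda^{k+1}g_T\|$ via (\ref{aQ}) and (\ref{1amonot}). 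Hille's theorem (\ref{com_boc}) then gives $u_T:=\int_0^T B(s,g,h)ds\in\mathcal D_+(\Lambda^\infty)$, and (\ref{vt}) gives $S(t,g,h)=\int_0^tV^{t-s}B(s,g,h)ds\le\int_0^t B(s,g,h)ds\le u_T$ on $[0,T]$.

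For part (b), $Q^\pm$ are autonomous, so $w_T$ is independent of $s$; however Assumption $(A)$ only yields $Q^{\pm}(\mathcal D_+(\Lambda^3))\subset\mathcal D_+(\Lambda^2)$, hence merely $w_T\in\mathcal D_+(\Lambda^2)$, and the direct estimate of part (a) cannot reach $\mathcal D_+(\Lambda^3)$. The remedy, and the main point of this case, is to retain the semigroup: from $B(s,g,h)\le w_T$ and the positivity of $V^{t-s}$ one gets $S(t,g,h)\le\int_0^tV^{t-s}w_T\,ds=\int_0^tV^rw_T\,dr$, and since the generator of $\{V^t\}$ is $L=-a(\|\Lambda f_0\|)\Lambda$ with $a(\|\Lambda f_0\|)\ne0$ (so $\mathcal D(L^k)=\mathcal D(\Lambda^k)$), the smoothing property (\ref{rezol1}) upgrades $w_T\in\mathcal D(\Lambda^2)$ to $\int_0^tV^rw_T\,dr\in\mathcal D(\Lambda^3)$. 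Thus $u_T:=\int_0^TV^rw_T\,dr\in\mathcal D_+(\Lambda^3)$ dominates $S(t,g,h)$ on $[0,T]$, finishing (b). I expect this one-degree gain of $\Lambda$-regularity from the convolution against $V$ to be the crux; a secondary technical point is the measurability and domination needed to integrate $\Lambda^k Q^{+}(\cdot,g(\cdot))$ in part (a).
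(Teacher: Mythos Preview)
Your proof is correct and follows essentially the same route as the paper: bound $B(s,g,h)$ by a majorant built from the constant $g_T$, then in case (a) integrate over $[0,T]$ and invoke Hille's theorem to land in $\mathcal D_+(\Lambda^\infty)$, while in case (b) keep the semigroup and use the smoothing property (\ref{rezol1}) to gain the extra degree of $\Lambda$-regularity. The only noteworthy difference is that in part (a) the paper sets the dominating element to be $\int_0^T w_T(s)\,ds$ (your notation), i.e.\ it integrates the majorant involving the \emph{constant} $g_T$, whereas you set $u_T:=\int_0^T B(s,g,h)\,ds$; the paper's choice sidesteps the measurability issue you flag at the end, since condition (a) of Theorem~\ref{th1} directly asserts $\Lambda^k Q^{+}(\cdot,g_T)\in L^1_{loc}$ for fixed $g_T\in\mathcal D_+(\Lambda^\infty)$, and the remaining terms of $w_T$ are constant in $s$.
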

\begin{proof}
If  $ g$, $h$ $\in$ ${\mathcal M}_\infty$, in case (a) ($g$, $h$ $\in$ ${\mathcal M}_3$ in case (b)),  then $S(\cdot,g,h)\in C({\mathbb R}_+; X)$, by virtue of Remark \ref{contS}. Moreover, for some  fixed (arbitrary) $0<T<\infty$,   there is $g_T$ $\in$ ${\mathcal D}_{+}(\Lambda^\infty)$, in case (a) ($g_T$ $\in$ $\mathcal{D}(\Lambda^3) \cap X_{+}$, in case (b)) such that $g(t) \leq g_T$  for all $t\in[0,T]$. 
Consequently, using  Lemma \ref{lem2}, the  monotonicity  assumptions of ($A_0$) and ($A_1$), and the obvious inequality $\int_0^t\Delta(s,h(s))ds$ $ \leq \int_0^T\Delta(s,h(s))ds$, we obtain from (\ref{6}) 
\begin{equation}\label{BHqa}
B(t,g,h) \leq u_{g,h,T}, \quad a.e \; on \; [0,T],
\end{equation}
where $u_{g,h,T}(t)$ $:=$ $Q^{+}(t,g_{T})$$ +$$a\left(\| \Lambda g_{T}\| +\int_0^T\Delta(s,h(s))ds\right)\Lambda g_T$.

(a) To prove (a), set 
\begin{equation}\label{sT}
s_{T,a}:=\int_0^T u_{g,h,T}(s)ds.
\end{equation}
Using (\ref{vt}),  (\ref{steq}),  and (\ref{BHqa}), we get $S(t, g,h)\leq s_{T,a}$  a.e. on $[0,T]$. 
But, in case (a),  $g_T $, $\Lambda g_T$ $\in$  ${\mathcal D}_{+}(\Lambda^\infty)$. Besides,  $\Lambda^k Q^{+}(\cdot,g_{T}) \in L^1(0,T;X_{+})$ for all $k=0,1,2,...$, due to assumptions (a) of Theorem \ref{th1}. Thus we have $\Lambda^k u_{g,h,T}$ $\in$ $L^1(0,T;X_{+})$ for all $k=0,1,2,...$. Then, by virtue of  (\ref{com_boc}), for each  $k=1,2,...$,  the operator    $\Lambda^k$  can be applied to (\ref{sT}), and  interchanged   with the integral therein. Therefore,   
$s_{T,a}\in\mathcal{D}_{+}(\Lambda^\infty)$, concluding the proof of (a), because $T$ is arbitrary.

(b) In case (b), recall that   $Q^{+}$ does not  depend explicitly of $t$, because of the assumptions (b)  of Theorem \ref{th1}. Therefore,  $u_{g,h,T}$ 
is  $t$ - independent.  Then, from (\ref{steq}) and  (\ref{BHqa}),  we get $S(t,g,h)$ $\leq$ $ \int_0^tV^{t-s}u_{g,T}ds$ $=$ $ \int_0^tV^{s}u_{g,h,T}ds$ $\leq$ $s_{T,b}$ for all  $0\leq t \leq T$, where 
\begin{equation}\label{sTb}
s_{T,b}:=\int_0^TV^{s}u_{g,h,T}ds.
\end{equation}
Since $g_T$  $\in$ $\mathcal{D}(\Lambda^3) \cap X_{+}$, the domain assumptions on  $Q^{+}$ and $\Lambda$ imply $u_{g,h,T}$   $\in$ $\mathcal{D}(\Lambda^2) \cap X_{+}$. Then by applying (\ref{rezol1}) to (\ref{sTb}), we get  $s_{T,b}$ $\in$ $\mathcal{D}(\Lambda^3) \cap X_{+}$, which completes the proof of (b).
\end{proof}

\begin{lemma}\label{tech}
	
	(a) Assume  conditions (a) of Theorem \ref{th1} hold. Then for each $n=1,2,...$, one has $f_{n}$  $\in$ $ {\mathcal M}_\infty$.

	(b) Assume  conditions (b) of Theorem \ref{th1} hold.  Then for each $n=1,2,...$, one has $f_{n}$ $\in$  ${\mathcal M}_3$.
	
	(c) In both cases, (a) and (b), for each $n=1,2,...$, $f_{n}$ is a.e. differentiable on ${\mathbb  R}_{+}$.
\end{lemma}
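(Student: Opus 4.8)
The plan is to establish (a) and (b) by induction on $n$, built on the recursion (\ref{42nou}) together with the invariance of $S$ furnished by Lemma \ref{gt}, and to obtain (c) by splitting $f_n$ into a semigroup orbit and a Duhamel term. Write $\mathcal{M}$ for $\mathcal{M}_\infty$ in case (a) and for $\mathcal{M}_3$ in case (b). Two elementary facts about $\mathcal{M}$ will be used repeatedly. First, $\mathcal{M}$ is closed under addition: given $g,\tilde g\in\mathcal{M}$ and $T$, dominating elements $g_T,\tilde g_T$ of $\mathcal{D}_{+}(\Lambda^\infty)$ (resp. $\mathcal{D}_{+}(\Lambda^3)$) add to a dominating element $g_T+\tilde g_T$ of the same cone, while continuity and positivity plainly persist. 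Second, for any $f_{0,n}\in\mathcal{D}_{+}(\Lambda^\infty)$ the orbit $t\mapsto V^t f_{0,n}$ lies in $\mathcal{M}$: it is continuous and $X_{+}$-valued since $\{V^t\}$ is a positive $C_0$ semigroup, and (\ref{vt}) gives $V^t f_{0,n}\leq f_{0,n}$ for all $t$, with $f_{0,n}\in\mathcal{D}_{+}(\Lambda^\infty)\subseteq\mathcal{D}_{+}(\Lambda^3)$.

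For (a), the base cases hold since $f_1\equiv 0\in\mathcal{M}_\infty$ and $f_2(t)=V^t f_{0,2}\in\mathcal{M}_\infty$ by the second fact. Assuming $f_{n-1},f_{n-2}\in\mathcal{M}_\infty$ for some $n\geq 3$, Lemma \ref{gt}(a) yields $S(\cdot,f_{n-1},f_{n-2})\in\mathcal{M}_\infty$, whence by the first fact $f_n(t)=V^t f_{0,n}+S(t,f_{n-1},f_{n-2})\in\mathcal{M}_\infty$. Case (b) is verbatim the same, with $\mathcal{M}_3$ and Lemma \ref{gt}(b) in place of $\mathcal{M}_\infty$ and Lemma \ref{gt}(a); the inclusion $\mathcal{D}_{+}(\Lambda^\infty)\subseteq\mathcal{D}_{+}(\Lambda^3)$ makes every $f_{0,n}$ admissible.

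For (c), decompose $f_n(t)=V^t f_{0,n}+w_n(t)$ with $w_n(t):=S(t,f_{n-1},f_{n-2})=\int_0^t V^{t-s}\phi_n(s)\,ds$, $\phi_n:=B(\cdot,f_{n-1},f_{n-2})$ (and $w_1=w_2=0$). The orbit term is everywhere differentiable because $f_{0,n}\in\mathcal{D}(\Lambda)=\mathcal{D}(L)$, where $L=-a(\|\Lambda f_0\|)\Lambda$ generates $\{V^t\}$. For $w_n$, parts (a)/(b) place $f_{n-1},f_{n-2}$ in $\mathcal{M}\subseteq L^\infty_{2,loc}$ (a dominating element of $\mathcal{D}_{+}(\Lambda^k)$, $k\geq 2$, forces $f_{n-1}(t),f_{n-2}(t)\in\mathcal{D}(\Lambda^2)$ by p-saturation, Lemma \ref{lema3}(d), and $\|\Lambda^2 f_{n-i}(t)\|\leq\|\Lambda^2 g_T\|$ by (\ref{1amonot})), so (\ref{Bmap}) gives $\phi_n\in L^1_{1,loc}$; equivalently $\phi_n(s)\in\mathcal{D}(L)$ a.e. with $L\phi_n\in L^1_{loc}$. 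Hille's theorem (\ref{com_boc}), applied to the closed operator $L$ commuting with $V^{t-s}$, then shows $w_n(t)\in\mathcal{D}(L)$ with $Lw_n(t)=\int_0^t V^{t-s}L\phi_n(s)\,ds$ continuous in $t$; a Fubini interchange together with the identity $\int_0^\tau V^u L g\,du=V^\tau g-g$ ($g\in\mathcal{D}(L)$) converts this into the integrated form $w_n(t)=\int_0^t[Lw_n(s)+\phi_n(s)]\,ds$. Since the integrand lies in $L^1_{loc}$, $w_n$ is locally absolutely continuous, hence a.e. differentiable, and therefore so is $f_n$.

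The genuine obstacle is the differentiability in (c): a forcing merely in $L^1_{loc}$ would leave the Duhamel integral only continuous, so the extra regularity $\phi_n\in L^1_{1,loc}$ (values in $\mathcal{D}(\Lambda)$ with $L^1_{loc}$ image) must be exploited via Hille's theorem to commute $L$ with the integral, after which the Fubini/semigroup-identity computation (or, alternatively, a difference-quotient argument using Lebesgue points of $\phi_n$) upgrades the pointwise right derivative into bona fide a.e. differentiability. Parts (a) and (b), by contrast, are routine inductions once Lemma \ref{gt} is available, their only delicate ingredient being the repeated use of p-saturation to turn domination by an element of $\mathcal{D}_{+}(\Lambda^k)$ into pointwise membership in that domain.
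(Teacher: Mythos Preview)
Your proof is correct and follows essentially the same route as the paper. For (a) and (b) both arguments do the same induction on $n$ from the recursion (\ref{42nou}), using $V^{(\cdot)}f_{0,n}\in\mathcal{M}$ via (\ref{vt}) and the invariance of $S$ from Lemma~\ref{gt}; for (c) the paper invokes the standard inhomogeneous-semigroup argument from \cite[Ch.~4 \S4.2]{Pazy} after observing $B(\cdot,f_{n-1},f_{n-2}),\,\Lambda B(\cdot,f_{n-1},f_{n-2})\in L^1(0,T;X_+)$, and your Hille/Fubini computation is precisely a spelled-out version of that reference.
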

\begin{proof}
From (\ref {42nou}), it follows that  the assertions (a) and (b) are trivially  checked for $f_{1}$ and $f_{2}$,   by setting  $g_{1,T}$$:=$$0$ and (due to (\ref {vt}))  $ g_{2,T}$ $:=$$f_{0,2}$, respectively. Since, in  (\ref {42nou}), obviously, $V^{(\cdot)}f_{0,n}\in {\mathcal M}_{\infty}$
the proof of (a) and (b) can be completed  by a straightforward induction based on the application  of Lemma \ref{gt} to (\ref {42nou}).

(c) Cases $n=1$ and $n=2$ are trivial. Let $n=3,4,...$. Fix an arbitrary $0 <T < \infty$. By (a), (b), and, say,  Lemma \ref{lem2},  we have $B(\cdot,f_{n-1},f_{n-2})$, $\Lambda B(\cdot,f_{n-1},f_{n-2})$ $\in$ $ L^{1}(0,T; X_{+})$. But   $\int_0^tV^{t-s}B(s,f_{n-1},f_{n-2})ds$ $=$ $f_{n}(t)$ $-$ $V^{t}f_{0,n}$ $\in$ ${\mathcal D}(\Lambda^3)$ $\subset$  ${\mathcal D}(\Lambda)$ on $[0,T]$.   
Thus, since $L:=-a(\left\| \Lambda f_0\right\|)\Lambda $ is the infinitesimal generator  of the semigroup $V^{t}$, the proof can be easily concluded by a standard argument   (see, e.g., \cite[Ch.4  $\S$ 4.2]{Pazy}), and, finally recalling   that $T$ is arbitrary.
\end{proof}	
\begin{lemma}\label{subgb}  Suppose  the
	 conditions of Theorem \ref{th1} are fulfilled. Then for each 
	 $n=1, 2,3,...$, 
	 
	 (a)  $f_{n}$ $\in$ ${\mathcal M}_{3}$. In particular,  $f_n \in L_{k, loc}^{1}({\mathbb R}_{+}; X_{+})$,  $k=0,1,2,3$;
	 
	 (b) $\Lambda_1 f_n$ $\in$ $L_{loc}^{1}({\mathbb R}_{+};X_+)$;

	 (c) $Q^{\pm}(\cdot,f_{n}(\cdot))\in L_{k, loc}^{1}({\mathbb R}_{+}; X_{+})$ for $k=0,1,2$, and $\Lambda_{1} Q^{\pm}(\cdot,f_n(\cdot))$ $\in$  $L_{loc}^{1}({\mathbb R}_{+};X_+)$.
\end{lemma}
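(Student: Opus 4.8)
The plan is to obtain (a) immediately from Lemma \ref{tech}, and then to convert every remaining claim into a pointwise order inequality against a single fixed element of ${\mathcal D}_{+}(\Lambda^3)$, reducing each norm estimate to a model inequality already established.

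For (a): under conditions (a) of Theorem \ref{th1}, Lemma \ref{tech} gives $f_{n}\in{\mathcal M}_\infty$, and since each $g\in{\mathcal M}_\infty$ is dominated on every $[0,T]$ by an element of ${\mathcal D}_{+}(\Lambda^\infty)\subset{\mathcal D}_{+}(\Lambda^3)$, one has ${\mathcal M}_\infty\subset{\mathcal M}_3$; under conditions (b), Lemma \ref{tech} already gives $f_{n}\in{\mathcal M}_3$. The chain ${\mathcal M}_3\subset L^1_{3,loc}({\mathbb R}_{+};X_{+})\subseteq L^1_{k,loc}({\mathbb R}_{+};X_{+})$, $k=0,1,2,3$ (recorded before Lemma \ref{gt}, together with Remark \ref{lem1a}), then yields the ``in particular'' clause. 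The decisive gain is that, fixing $0<T<\infty$, there is $g_{n,T}\in{\mathcal D}_{+}(\Lambda^3)$ with $f_{n}(t)\leq g_{n,T}$ on $[0,T]$; as ${\mathcal D}_{+}(\Lambda^3)$ is p-saturated (Lemma \ref{lema3}(d), which applies to $\Lambda$ by $(A)$ and $(A_0)$), the bound $0\leq f_{n}(t)\leq g_{n,T}$ forces $f_{n}(t)\in{\mathcal D}_{+}(\Lambda^3)$ for every $t$. Positivity, hence isotonicity, of the linear operators $\Lambda,\Lambda^2,\Lambda^3,\Lambda_1$ then yields the pointwise bounds $0\leq\Lambda^j f_{n}(t)\leq\Lambda^j g_{n,T}$ ($j=1,2,3$) and $0\leq\Lambda_1 f_{n}(t)\leq\Lambda_1 g_{n,T}$ on $[0,T]$, so that, by (\ref{1amonot}), $\|\Lambda^j f_{n}(\cdot)\|$ and $\|\Lambda_1 f_{n}(\cdot)\|$ are locally bounded.

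Assertion (b) is then immediate: $(A_3)$ gives ${\mathcal D}_{+}(\Lambda)\subset{\mathcal D}(\Lambda_1)$, hence $f_{n}(t)\in{\mathcal D}(\Lambda_1)$, while the bound $\|\Lambda_1 f_{n}(t)\|\leq\|\Lambda_1 g_{n,T}\|$ gives local boundedness; with measurability of $t\mapsto\Lambda_1 f_{n}(t)$ this delivers $\Lambda_1 f_{n}\in L^1_{loc}({\mathbb R}_{+};X_{+})$. For (c), measurability of $t\mapsto Q^{\pm}(t,f_{n}(t))$ follows from the first general assumption, since $f_{n}$ is continuous and $f_{n}(t)\in{\mathcal D}_{+}(\Lambda)\subset{\mathcal D}$; moreover $(A)$ gives $Q^{\pm}(t,f_{n}(t))\in{\mathcal D}_{+}(\Lambda^2)$, which lies in ${\mathcal D}(\Lambda^k)$ ($k=0,1,2$) and, via ${\mathcal D}_{+}(\Lambda)\subset{\mathcal D}(\Lambda_1)$, in ${\mathcal D}(\Lambda_1)$. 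The norm estimates follow by dominating $f_{n}(t)\leq g_{n,T}$ and invoking the model inequalities: (\ref{a54}) for $k=0$; (\ref{aQ}) with $k=2$ together with (\ref{3b}) for $\Lambda Q^{\pm}$; (\ref{aQ}) with $k=3$ together with (\ref{povg}), using that $\rho$ is non-decreasing and the bound on $\|\Lambda_1 f_{n}\|$ from (b), for $\Lambda^2 Q^{\pm}$; and, for $\Lambda_1 Q^{\pm}$, the chain $\Lambda_1 Q^{-}(t,f_{n}(t))\leq\Lambda_1 Q^{-}(t,g_{n,T})\leq a(\|\Lambda g_{n,T}\|)\,\Lambda_1\Lambda g_{n,T}$, obtained from (\ref{a2poz}) and the isotonicity of $\Lambda_1$, combined with (\ref{lambdaunu}) to pass from $Q^{-}$ to $Q^{+}$. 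In each case the dominating right-hand side is a fixed ($t$-independent) element, the monotonicity of $a$ and $\rho$ absorbing the $t$-dependence, so all the relevant norms are locally bounded and hence locally integrable.

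The estimates above are routine once the domination is in place; the genuinely delicate point is the measurability of the operator-transformed maps $\Lambda^k Q^{\pm}(\cdot,f_{n}(\cdot))$, $\Lambda_1 Q^{\pm}(\cdot,f_{n}(\cdot))$ and $\Lambda_1 f_{n}(\cdot)$. I would settle each by the standard weak-measurability route: it is the image under a closed operator of a measurable, locally norm-bounded $X_{+}$-valued map, so pairing with functionals in the (weak-$*$ dense) domain of the adjoint produces measurable scalar functions, and separability of $X$ together with the uniform norm bound upgrades this to strong measurability by Pettis' theorem. The structural linchpin that renders the rest elementary is the p-saturation of ${\mathcal D}_{+}(\Lambda^3)$, which promotes the mere order-domination $f_{n}\in{\mathcal M}_3$ to the genuine membership $f_{n}(t)\in{\mathcal D}_{+}(\Lambda^3)$ exploited throughout.
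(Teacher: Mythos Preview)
Your proof is correct and follows essentially the same route as the paper: deduce (a) from Lemma~\ref{tech}, exploit p-saturation to turn membership in ${\mathcal M}_3$ into pointwise domination by a fixed $g_{n,T}\in{\mathcal D}_{+}(\Lambda^3)$, and then read off all the norm bounds from the model inequalities $(A_1)$--$(A_3)$, passing from $Q^{-}$ to $Q^{+}$ via (\ref{3b}), (\ref{lambdaunu}), (\ref{povg}). The paper's own proof is terser---it records the same chain of dominations without separating out the measurability issue---so your explicit attention to that point via Pettis' theorem is, if anything, more careful than the original.
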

\begin{proof}
(a) is immediate from Lemma \ref{tech} and Remark \ref{lem1a}.

(b) follows from (a) and the property  $\Lambda_1 ({\mathcal M}_{3})$$\subset$ $L^{1}_{loc}({\mathbb R}_{+}; X_{+})$  (due to $(A_3)$).

(c) By (a) and (\ref{a2poz}),  clearly,  $Q^{- }(\cdot ,f_n(\cdot))$ $\in L_{2, loc}^{1}({\mathbb R}_{+};X_{+})$. Moreover,  (\ref{a2poz}) yields $\Lambda_1 Q^{-}(t, f_{n}(t))$ $ \leq$ $ a(\| \Lambda f_{n}(t) \|)\Lambda_1 \Lambda f_{n}(t)$ for a.a.  $t \geq 0$.  But $\Lambda ({\mathcal M}_{3})$ $ \subset$ $ {\mathcal M}_{2}$.   Besides, $\Lambda_1 ({\mathcal M}_{2})$ $\subset$ $L^{1}_{loc}({\mathbb R}_{+}; X_{+})$, by virtue of  $(A_3)$. Consequently,   $\Lambda_1 Q^{-}(\cdot, f_{n}(\cdot))$ $\in$  $L_{loc}^1({\mathbb R}_{+}; X_{+})$.  So $\Lambda_1 Q^{+}(\cdot, f_{n})$ $\in$  $L_{loc}^1({\mathbb R}_{+}; X_{+})$, by virtue of (\ref{lambdaunu}). Also, from (\ref{povg}), we get $Q^{+}(\cdot ,f_n(\cdot))$ $\in L_{2, loc}^{1}({\mathbb R}_{+};X_{+})$.  Remark \ref{lem1a} completes the proof of (c).
\end{proof}
The  next lemmas are needed to establish the convergence of the approximating sequence  $\{f_{n}(t)\}$ defined by (\ref{42nou}).
\begin{lemma} \label{lemmonot} The  sequence $\{f_{n}(t)\}$  is positive and increasing for all $t \geq 0$.
\end{lemma}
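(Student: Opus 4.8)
The plan is to prove, by induction on $n$, that $f_n(t)\le f_{n+1}(t)$ for all $t\ge 0$; positivity of the whole sequence is then not a separate issue, since each $f_n$ already lies in ${\mathcal M}_3\subset C({\mathbb R}_+;X_+)$ by Lemma~\ref{subgb}(a), and in any case the chain $0=f_1\le f_2\le\cdots$ forces $f_n(t)\ge 0$. Two facts drive the argument: the monotonicity of the approximating data, $f_{0,n}\le f_{0,n+1}$ (from the increasing choice preceding (\ref{42nou})), and the isotonicity of the operator $S$ of (\ref{steq}) in both of its arguments. Because the recursion (\ref{42nou}) expresses $f_n$ in terms of the two previous iterates $f_{n-1}$ and $f_{n-2}$, the induction must carry the comparison at two consecutive indices simultaneously.

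First I would record the isotonicity of $S$. Let $g_1\le g_2$ and $h_1\le h_2$ a.e. on ${\mathbb R}_+$, with all four maps in $L^{\infty}_{2,loc}({\mathbb R}_+;X_+)$. By Lemma~\ref{lem2}, $B(s,g_1,h_1)\le B(s,g_2,h_2)$ a.e., so the integrand $V^{t-s}[B(s,g_2,h_2)-B(s,g_1,h_1)]$ is a positive, Bochner integrable map (positivity of the semigroup $\{V^t\}$), whence its integral lies in $X_+$. Thus $S(t,g_1,h_1)\le S(t,g_2,h_2)$ for every $t\ge 0$. This is the crux of the proof; it rests only on Lemma~\ref{lem2} and the positivity of $\{V^t\}$, and it is legitimately applicable to the iterates because each $f_n\in{\mathcal M}_3\subset L^{\infty}_{2,loc}({\mathbb R}_+;X_+)$ by Lemma~\ref{subgb}(a).

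With isotonicity of $S$ available, the induction is straightforward. For the base cases, $f_1\equiv 0\le V^tf_{0,2}=f_2$ by positivity of $V^t$, and $f_2=V^tf_{0,2}\le V^tf_{0,3}+S(t,f_2,f_1)=f_3$, since $f_{0,2}\le f_{0,3}$ gives $V^tf_{0,2}\le V^tf_{0,3}$ and $S(t,f_2,f_1)\ge 0$. For the step with $n\ge 3$, assuming $f_{n-2}\le f_{n-1}$ and $f_{n-1}\le f_n$, subtracting the defining relations (\ref{42nou}) yields
\[
f_{n+1}(t)-f_n(t)=V^t(f_{0,n+1}-f_{0,n})+\left[S(t,f_n,f_{n-1})-S(t,f_{n-1},f_{n-2})\right],
\]
where the first summand is positive because $f_{0,n}\le f_{0,n+1}$ and $V^t$ is positive, and the second is positive by the isotonicity of $S$ applied to $f_{n-1}\le f_n$ and $f_{n-2}\le f_{n-1}$. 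Hence $f_n\le f_{n+1}$, closing the induction. The only genuine obstacle is the two-argument isotonicity of $S$ isolated above; the remaining steps amount to the routine two-step monotone-iteration bookkeeping.
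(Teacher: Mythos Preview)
Your argument is correct and is precisely the ``straightforward induction'' the paper alludes to: it uses exactly the three ingredients the paper names (monotonicity of $\{f_{0,n}\}$, positivity of $V^t$, and Lemma~\ref{lem2}), with the isotonicity of $S$ being the immediate consequence of Lemma~\ref{lem2} combined with positivity of $V^t$. The only addition is that you spell out the two-step nature of the induction and invoke Lemma~\ref{subgb}(a) to place the iterates in the right space for Lemma~\ref{lem2} to apply, which the paper leaves implicit.
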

\begin{proof} This result follows from (\ref{42nou}) by a straightforward induction which uses the positivity and monotonicity of  $\{f_{0.n}\}$, the positivity of the linear semigroup  $V^t$, and  applies  Lemma \ref{lem2}.
\end{proof}

Based on Lemma \ref{tech}(c),  we can differentiate (\ref{42nou}),   a.e. on ${\mathbb  R}_{+}$. We obtain
\begin{equation}
\frac d{dt}f_n(t)=B(t,f_{n-1},f_{n-2})-a(\left\| \Lambda f_0\right\|
)\Lambda f_n(t),\quad t>0\quad a.e., \quad n\geq 3.  \label{dfqdtnou}
\end{equation}
Integrating again Eq. (\ref{dfqdtnou}), and using (\ref{6}), we obtain for $\ n\geq 3$,
\begin{equation}
\begin{split}
& f_n(t)  =f_{0,n}+\int_0^t [Q^{+}(s,f_{n-1}(s))-Q^{-}(s,f_{n-1}(s))]ds \\
&+  \int_0^t \biggl[ a\left( \left\| \Lambda f_{n-1}(s)\right\|
+\int_0^s\Delta (\tau,f_{n-2}(\tau))d\tau
\right) \Lambda f_{n-1}(s)\\
&-a(\left\|\Lambda f_0\right\| )\Lambda f_n(s)\biggr]ds,
\end{split}
\label{p1nou}
\end{equation}
which is useful to prove the following property.
\begin{lemma}
\label{Lemma 5new} For all  $n=2,3,....$,
\begin{equation}
f_n(t)+\int_0^t Q^{-}(s,f_{n-1}(s)) ds\leq f_{0,n}+\int_0^t Q^{+}(s,f_{n-1}(s))ds, \quad  \forall t \geq 0, \label{inqnew}
\end{equation}
and
\begin{equation}
\left\| \Lambda f_n(t)\right\| +\int_0^t\Delta (s,f_{n-1}(s))ds \leq \left\|
\Lambda f_{0,n}\right\| \leq \left\|
\Lambda f_0\right\|, \quad  \forall t \geq 0.  \label{42bnew}
\end{equation}
\end{lemma}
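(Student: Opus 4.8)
The plan is to prove both (\ref{inqnew}) and (\ref{42bnew}) by induction on $n$, carrying (\ref{42bnew}) as the inductive hypothesis and obtaining (\ref{inqnew}) along the way. The base case $n=2$ is checked directly: from (\ref{42nou}) one has $f_2(t)=V^tf_{0,2}$ and $f_1\equiv 0$, so $Q^{\pm}(s,f_1(s))=0$ and $\Delta(s,f_1(s))=0$ a.e. by (\ref{qzero}); then (\ref{inqnew}) reduces to $V^tf_{0,2}\leq f_{0,2}$, which is exactly (\ref{vt}). Moreover (\ref{vt}) gives $0\leq f_2(t)\leq f_{0,2}$, so applying the positive operator $\Lambda$ and the monotone norm (\ref{1amonot}) yields $\|\Lambda f_2(t)\|\leq\|\Lambda f_{0,2}\|\leq\|\Lambda f_0\|$ (the last step using $f_{0,2}\leq f_0$, positivity of $\Lambda$, and (\ref{1amonot})), which is (\ref{42bnew}).

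For the inductive step, fix $n\geq 3$, assume (\ref{42bnew}) holds with $n$ replaced by $n-1$, and write $\theta(s):=\|\Lambda f_{n-1}(s)\|+\int_0^s\Delta(\tau,f_{n-2}(\tau))\,d\tau$, so that the hypothesis reads $\theta(s)\leq\|\Lambda f_0\|$. First I would establish (\ref{inqnew}). Rearranging the integral representation (\ref{p1nou}) isolates the extra term $\int_0^t\left[a(\theta(s))\Lambda f_{n-1}(s)-a(\|\Lambda f_0\|)\Lambda f_n(s)\right]ds$, and it suffices to show its integrand is $\leq 0$ a.e. Since $a$ is non-decreasing and $\theta(s)\leq\|\Lambda f_0\|$, we have $0\leq a(\theta(s))\leq a(\|\Lambda f_0\|)$, whence $a(\theta(s))\Lambda f_{n-1}(s)\leq a(\|\Lambda f_0\|)\Lambda f_{n-1}(s)$ by positivity of $\Lambda f_{n-1}(s)$; and Lemma \ref{lemmonot} gives $f_{n-1}(s)\leq f_n(s)$, hence $\Lambda f_{n-1}(s)\leq\Lambda f_n(s)$. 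Combining, the integrand is $\leq 0$ and (\ref{inqnew}) follows.

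Next I would prove (\ref{42bnew}) at level $n$. Apply $\Lambda$ to (\ref{p1nou}); the interchange of $\Lambda$ with each Bochner integral is legitimate by Hille's theorem (\ref{com_boc}), the needed integrability of $\Lambda Q^{\pm}(\cdot,f_{n-1})$, $\Lambda^2 f_{n-1}$, and $\Lambda^2 f_n$ being supplied by Lemma \ref{subgb} together with $f_n,f_{n-1}\in\mathcal{M}_3$. Moving the two subtracted terms to the left produces an identity between sums of positive elements,
\[
\Lambda f_n(t)+\int_0^t\Lambda Q^-(s,f_{n-1}(s))\,ds+a(\|\Lambda f_0\|)\int_0^t\Lambda^2 f_n(s)\,ds=\Lambda f_{0,n}+\int_0^t\Lambda Q^+(s,f_{n-1}(s))\,ds+\int_0^t a(\theta(s))\Lambda^2 f_{n-1}(s)\,ds.
\]
Taking norms and invoking the additivity (\ref{1a}) and (\ref{bi}) on both sides, then recognizing $\|\Lambda Q^-\|-\|\Lambda Q^+\|=\Delta$ (cf. (\ref{3b})), gives
\[
\|\Lambda f_n(t)\|+\int_0^t\Delta(s,f_{n-1}(s))\,ds=\|\Lambda f_{0,n}\|+\int_0^t\left[a(\theta(s))\|\Lambda^2 f_{n-1}(s)\|-a(\|\Lambda f_0\|)\|\Lambda^2 f_n(s)\|\right]ds.
\]
By the inductive bound $a(\theta(s))\leq a(\|\Lambda f_0\|)$ and the monotonicity $\|\Lambda^2 f_{n-1}(s)\|\leq\|\Lambda^2 f_n(s)\|$ (from Lemma \ref{lemmonot} and (\ref{1amonot})), the last integrand is $\leq 0$; hence the right-hand side is $\leq\|\Lambda f_{0,n}\|\leq\|\Lambda f_0\|$, which is (\ref{42bnew}) and closes the induction.

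The main obstacle is the bookkeeping that lets one, after applying $\Lambda$, group every surviving term into a sum of positive elements: this is precisely what converts the vector identity into the scalar "conservation" identity via additivity (\ref{1a}), (\ref{bi}), and it is where positivity and p-saturation of the domains $\mathcal{D}_+(\Lambda^k)$ are genuinely used. A secondary point is structural: both target inequalities at level $n$ hinge on (\ref{42bnew}) at level $n-1$, so the induction must carry (\ref{42bnew}) as its hypothesis, with (\ref{inqnew}) falling out as a byproduct; the integrability justifying (\ref{com_boc}) is then routine given Lemma \ref{subgb}.
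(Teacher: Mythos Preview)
Your proof is correct and follows essentially the same route as the paper's: induction with base case $n=2$ via (\ref{vt}) and (\ref{qzero}), then using (\ref{42bnew}) at level $n-1$ together with Lemma~\ref{lemmonot} to bound the extra integral term in (\ref{p1nou}), first in $X$ to get (\ref{inqnew}), and then after applying $\Lambda$ and the norm identities (\ref{1a}), (\ref{bi}), (\ref{3b}) to get (\ref{42bnew}). The only cosmetic difference is that you apply $\Lambda$ to the \emph{equation} (\ref{p1nou}) and obtain an exact scalar identity before bounding the residual integral, whereas the paper applies $\Lambda$ directly to the \emph{inequality} (\ref{inqnew}) and reaches (\ref{42bnew}) in one stroke; both orderings are equivalent.
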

\begin{proof} (See the proof of Lemma 4.3 in \cite{Gr2007}.) We proceed by induction.  Using (\ref{vt}),  one finds from (\ref{42nou}) that $0=f_{1}(t) \leq f_{2}(t)\leq f_0$. But $\Delta (t,0)=0$ a.e., because of  (\ref{qzero}). Therefore, (\ref{inqnew}) and (\ref{42bnew}) are trivially verified for $n=2$.

Suppose that (\ref{inqnew}) and (\ref{42bnew}) are true for $n=2,3,...,q$.  Since    $0\leq \Lambda f_{q}(t)\leq \Lambda f_{q+1}(t)$ (as a consequence of   Lemma \ref{lemmonot}) and $a$ is non-decreasing, we get
\[
0 \leq  a\left( \left\| \Lambda f_{q}(s)\right\| +\int_0^s\Delta
(\tau ,f_{q-1}(\tau))d\tau \right) \Lambda f_{q}(s) \leq a\left( \left\| \Lambda f_0\right\|
\right)  \Lambda f_{q+1}(s),
\]
which can be applied to   (\ref{p1nou}) in the case $n=q+1$. Thus we obtain
\begin{equation}
f_{q+1}(t)+\int_0^t Q^{-}(s,f_{q}(s))ds\leq f_{0,q+1}+\int_0^t Q^{+}(s,f_{q}(s))ds, \quad  \forall t \geq 0,
\label{inqnew2}
`\end{equation}
concluding the induction argument for the validity of (\ref{inqnew}).
Further, based on Lemma  \ref{subgb},  we can  apply  $\Lambda$  to (\ref{inqnew2})  and use  (\ref{com_boc}). Then, applying, (\ref{1amonot}), (\ref{1a}),  (\ref{bi}) and (\ref{3b}), we  finally obtain  (\ref{inqnew}) for $n=q+1$, which concludes the  proof of the lemma.
\end{proof}
\begin{lemma}
\label{5pova} Under the conditions of Theorem \ref{th1},
\begin{equation}
\left\| \Lambda^2 f_n(t)\right\| \leq \left\| \Lambda^2 f_0\right\| \exp (\rho(\left\| \Lambda
_1f_0\right\| )t), \quad \forall t \geq 0, \quad
 n=1,2,....  \label{pov2}
\end{equation}
\end{lemma}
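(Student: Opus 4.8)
The plan is to convert the abstract Povzner inequality (\ref{povg}) into a Gronwall-type recursion for the sequence $\{\|\Lambda^2 f_n(t)\|\}$, after first securing an a~priori bound on $\|\Lambda_1 f_n(t)\|$ so that the growth rate $\rho$ can be frozen at its initial value $\rho(\|\Lambda_1 f_0\|)$. The substantive work concerns $n\geq 3$; the cases $n=1,2$ I would dispose of directly, since $f_1\equiv 0$, while $\Lambda^2 f_2(t)=\Lambda^2 V^tf_{0,2}=V^t\Lambda^2 f_{0,2}\leq \Lambda^2 f_{0,2}$ by the commutation of $V^t$ with $\Lambda^2$ (the generator of $V^t$ being $-a(\|\Lambda f_0\|)\Lambda$) together with (\ref{vt}), so that $\|\Lambda^2 f_2(t)\|\leq\|\Lambda^2 f_{0,2}\|\leq\|\Lambda^2 f_0\|$.

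First I would establish the auxiliary estimate $\|\Lambda_1 f_n(t)\|\leq\|\Lambda_1 f_0\|$ for all $n$ and $t$. Starting from inequality (\ref{inqnew}) of Lemma~\ref{Lemma 5new}, and using Lemma~\ref{subgb}(b),(c) together with the closedness of $\Lambda_1$, I would apply $\Lambda_1$ to (\ref{inqnew}), commuting it with the Bochner integrals via (\ref{com_boc}); positivity of $\Lambda_1$ preserves the order. Taking norms and invoking (\ref{1a}), (\ref{1amonot}) and (\ref{bi}) yields
\[
\|\Lambda_1 f_n(t)\|+\int_0^t\|\Lambda_1 Q^-(s,f_{n-1}(s))\|\,ds\leq\|\Lambda_1 f_{0,n}\|+\int_0^t\|\Lambda_1 Q^+(s,f_{n-1}(s))\|\,ds,
\]
whereupon the $\Lambda_1$-dissipation (\ref{lambdaunu}) and $\|\Lambda_1 f_{0,n}\|\leq\|\Lambda_1 f_0\|$ (since $f_{0,n}\leq f_0$) give $\|\Lambda_1 f_n(t)\|\leq\|\Lambda_1 f_0\|$, hence $\rho(\|\Lambda_1 f_{n-1}(s)\|)\leq\rho(\|\Lambda_1 f_0\|)$ by monotonicity of $\rho$.

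Next I would apply $\Lambda^2$ to the integral identity (\ref{p1nou}). Lemma~\ref{subgb}(a),(c) supplies $f_n\in{\mathcal M}_3\subset L^1_{3,loc}$ and $Q^\pm(\cdot,f_{n-1})\in L^1_{2,loc}$, so $\Lambda^3 f_n$ and $\Lambda^2 Q^\pm(\cdot,f_{n-1})$ are locally Bochner integrable and (\ref{com_boc}) lets $\Lambda^2$ pass through the integrals. Moving the negative contributions to the left so that both sides lie in $X_+$, and then taking norms via (\ref{1a}) and (\ref{bi}), I obtain (writing $a(\cdots)$ for the coefficient from (\ref{p1nou}))
\[
\|\Lambda^2 f_n(t)\|=\|\Lambda^2 f_{0,n}\|+\int_0^t\big(\|\Lambda^2 Q^+(s,f_{n-1})\|-\|\Lambda^2 Q^-(s,f_{n-1})\|\big)ds+\int_0^t\big(a(\cdots)\|\Lambda^3 f_{n-1}(s)\|-a(\|\Lambda f_0\|)\|\Lambda^3 f_n(s)\|\big)ds.
\]
The Povzner inequality (\ref{povg}) bounds the first integrand by $\rho(\|\Lambda_1 f_{n-1}(s)\|)\|\Lambda^2 f_{n-1}(s)\|$. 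For the second integrand I would use (\ref{42bnew}) at index $n-1$, which forces the argument of $a$ in (\ref{p1nou}) to be $\leq\|\Lambda f_0\|$, so $a(\cdots)\leq a(\|\Lambda f_0\|)$; combined with $f_{n-1}\leq f_n$ (Lemma~\ref{lemmonot}), hence $\|\Lambda^3 f_{n-1}(s)\|\leq\|\Lambda^3 f_n(s)\|$, this makes that integrand nonpositive a.e., so it may be discarded. Together with $\|\Lambda^2 f_{0,n}\|\leq\|\Lambda^2 f_0\|$ this leaves the recursion
\[
\|\Lambda^2 f_n(t)\|\leq\|\Lambda^2 f_0\|+\rho(\|\Lambda_1 f_0\|)\int_0^t\|\Lambda^2 f_{n-1}(s)\|\,ds.
\]

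Finally I would close by induction on $n$: with $M=\|\Lambda^2 f_0\|$ and $R=\rho(\|\Lambda_1 f_0\|)$, the cases $n=1,2$ satisfy $\|\Lambda^2 f_n(t)\|\leq M\leq Me^{Rt}$, and assuming $\|\Lambda^2 f_{n-1}(s)\|\leq Me^{Rs}$ the recursion gives $\|\Lambda^2 f_n(t)\|\leq M+R\int_0^t Me^{Rs}\,ds=Me^{Rt}$, which is exactly (\ref{pov2}). I expect the main obstacle to be the rigorous justification of applying $\Lambda^2$ to (\ref{p1nou}) and commuting it with the Bochner integrals: this is precisely where the regularity $f_n\in{\mathcal M}_3$ and the local integrability of $\Lambda^2 Q^\pm(\cdot,f_{n-1})$ from Lemma~\ref{subgb} are indispensable, while the discarding of the $\Lambda^3$-terms rests on the already-established bound (\ref{42bnew}).
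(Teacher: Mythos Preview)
Your argument is correct, but it takes a somewhat more laborious route than the paper's. The $\Lambda_1$-bound step is identical to the paper's. The difference lies in how you reach the key recursion. The paper applies $\Lambda^2$ directly to the \emph{inequality} (\ref{inqnew}) rather than to the \emph{identity} (\ref{p1nou}); since (\ref{inqnew}) already has the $a(\cdots)\Lambda$-terms absorbed, the paper never sees any $\Lambda^3$-contributions and arrives immediately at $\|\Lambda^2 f_n(t)\|\leq\|\Lambda^2 f_{0,n}\|+\int_0^t\rho(\|\Lambda_1 f_{n-1}(s)\|)\|\Lambda^2 f_{n-1}(s)\|\,ds$ after invoking (\ref{povg}). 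Your detour through (\ref{p1nou}) forces you to justify the local integrability of $\Lambda^3 f_n$ and then to discard the $\Lambda^3$-terms via (\ref{42bnew}) and the monotonicity $f_{n-1}\leq f_n$; this is sound but unnecessary. For the final step, the paper uses the monotonicity $\|\Lambda^2 f_{n-1}(s)\|\leq\|\Lambda^2 f_n(s)\|$ to turn the recursion into a self-referential inequality and closes with Gronwall, whereas you keep $f_{n-1}$ on the right and close by induction on $n$; these two closures are equivalent and equally elementary.
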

\begin{proof} (See the proof of Lemma 4.4 in \cite{Gr2007}.)
	Cases $n=1$ is trivial. For $n \geq 2$, based on Lemma \ref{subgb}, we apply   $\Lambda_{1}$ to (\ref{inqnew}) and  use   (\ref{1amonot}), (\ref{1a}), and (\ref{bi}). We get 
\[
\left\| \Lambda_{1} f_n(t)\right\| \leq \left\|
\Lambda_{1} f_{0,n}\right\|-\int_0^t(\left\Vert \Lambda_{1} Q^{-}(t,f_{n-1}(s))\right\Vert -\left\Vert \Lambda_{1}
Q^{+}(t,f_{n-1}(s))\right\Vert)ds
\]
for all $n=2,3,...$, and $t \geq 0$.
But in the above inequality, the integrand is positive, because of (\ref{lambdaunu}). Thus
\begin{equation}
\left\| \Lambda_{1} f_n(t)\right\| \leq\left\|
\Lambda_{1} f_{0,n}\right\| \leq \left\|
\Lambda_{1} f_0\right\|,\quad \forall t \geq 0, \quad
n=2,3,....    \label{42bnewbis}
\end{equation}
Based on  Lemma \ref{subgb},  we apply (\ref{com_boc}) (with $\Gamma=\Lambda^2$) to (\ref{inqnew}). Then, using (\ref{1amonot}), (\ref{1a}),  (\ref{bi}), and  (\ref{povg}), we obtain
\[
\left\| \Lambda^2 f_n(t)\right\|\leq \left\| \Lambda^2 f_{0,n}\right\|+ \int_0^t\rho ((\left\Vert \Lambda _{1}f_{n-1}(s)\right\Vert
)\left\Vert \Lambda ^{2}f_{n-1}(s)\right\Vert ds, \; \forall t \geq 0
\]
($n=2,3,....$).  But $\rho ((\left\Vert \Lambda _{1}f_{n-1}(s)\right\Vert
)\leq \rho ((\left\Vert \Lambda_{1}f_0 \right\Vert
)$,  by virtue of  (\ref{42bnewbis}). Therefore,
\[
\left\| \Lambda^2 f_n(t)\right\|\leq   \left\| \Lambda^2 f_{0}\right\|+ \rho ((\left\Vert \Lambda _{1}f_0\right\Vert)\int_0^t\left\Vert \Lambda ^{2}f_n(s)\right\Vert ds \; \forall t \geq 0; \; n=2,3,...,  \label{it1}
\]
where the use of Gronwall's inequality concludes
the proof.
\end{proof}

By Lemmas  \ref{lemmonot}  and \ref{5pova}, for every $t\geq 0$, the positive sequence  $\{\Lambda^2 f_n(t)\}$ is   increasing, and  norm bounded, respectively.  Then Lemma \ref{lema3}(c) applies. Therefore,  $\exists$ $f:{\mathbb  R}_+ \mapsto {\mathcal D}_{+}(\Lambda^2)$ measurable,  such that, $\forall t \geq 0$,
\begin{equation}\label{linf}
\Lambda^k f_{n}(t) \nearrow \Lambda ^kf(t),  \quad as \quad n \rightarrow \infty; \quad  k=0,1,2.
\end{equation}
In particular, $\Lambda^k f_{0,n}\nearrow \Lambda^k  f_0$, $k= 1,2$. Then taking the limit in (\ref{pov2}), we find that $f$ satisfies (\ref{7c}). This and  Remark \ref{lem1a} imply  $f$ $\in$ $L^1_{k,loc} ({\mathbb  R}_{+}; X_{+})$, $k=0,1,2$.  Moreover,  by (\ref{a54}), we find that  $Q^{\pm}(\cdot, f)$, $\Lambda Q^{\pm}(\cdot, f)$, and $ \Delta f$ are in $ L^1_{loc} ({\mathbb  R}_{+}; X_{+})$. 
Further,  from ({\ref{a54}}) and (\ref{linf}), it follows that the increasing sequences  $\{\Lambda^k Q^{\pm}(t, f_n)\}$, $k=0,1$, are bounded in norm. Therefore, Levi's property implies that  they are  convergent a.e. on  ${\mathbb  R}_{+}$.   But  $ Q^{+}(t, \cdot)$ and $Q^{-}(t, \cdot)$ are o-closed.  Then, $Q^{\pm}(t, f_n) \nearrow Q^{\pm}(t, f)$ as $n \rightarrow \infty$  a.e. on ${\mathbb  R}_{+}$. Since $\Lambda$ is closed, it also follows that  $\Lambda Q^{\pm}(t, f_n) \nearrow \Lambda Q^{\pm}(t, f)$ as $n \rightarrow \infty$, a.e. on ${\mathbb  R}_{+}$.  Consequently, $\Delta (t, f_n) \nearrow \Delta (t, f)$ as $n\rightarrow \infty$, a.e. on ${\mathbb  R}_{+}$. Thus, applying  the Lebesgue's dominated convergence theorem, we obtain $ \int_0^s\Delta (\tau,
f_n(\tau))d\tau$ $\nearrow$ $ \int_0^s\Delta (\tau,
f(\tau))d\tau$  as $n \rightarrow \infty$, $\forall s>0$.
By the above considerations, and using that  $a$ is non-decreasing and continuous, we are enabled to apply conveniently the  dominated convergence theorem in (\ref{p1nou}) and (\ref{42bnew}). It follows that  $f$ is solution to (\ref{35}), and satisfies (\ref{7c}). Finally, Proposition \ref{equiv} concludes the existence part of Theorem \ref{th1}.
 
 The uniqueness of the solution follows by the same argument as in  Ref.  \cite{Gr2007} (inspired from  \cite{Ar})).
 In detail,  since $f$ is the limit of $\{f_n\}$, by applying the dominated convergence theorem to (\ref{42nou}), we find that $f$ is also solution to Eq. (\ref{6ab}). Let $F$ be another continuous, positive solution of  (\ref{3}), with the properties stated in Theorem \ref{th1}. Thus $F$ satisfies (\ref{5}), as $f$ does. Obviously, $F$ is also a solution to (\ref{6ab}), and  a simple induction  implies  $ f_{n}(t) \leq F(t)$ for all $t \geq 0$, and $n=1,2...$.  Consequently,  $0 \leq f(t) \leq F(t) $ for all $t \geq 0$. Thus,  if $\exists$   $t_* >0$ such that  $F(t_*) \neq f(t_*)$, then  $f(t_*) < F(t_*)$, hence   $ \|\Lambda f(t_*)\| < \|\Lambda F(t_*)\|$. Since $\Delta$ is isotone, we get
\[
\|\Lambda f_0 \|= \|\Lambda f(t_*)\| +\int_0^{t_*} \Delta (s,f(s))ds < \| \Lambda F(t_*) \| +\int_0^{t_*} \Delta (s,F(s))ds,
\]
in contradiction with the fact that  $F$ satisfies (\ref{5}).
$\Box$
\section{Conclusions}
In the present work, we have revised and generalized  the main results of Ref. \cite{Gr2007}.  If  the  setting of the current note reduces  to an AL - space,  then, in essence,   Theorem \ref{th1} and Corollary \ref{cor1}  reduce to \cite[Theorem 3.1]{Gr2007} and \cite[Corollary 3.1]{Gr2007}, respectively. (In fact, Theorem \ref{th1}(b) slightly improves the result of  \cite[Theorem 3.1(b)]{Gr2007}, by relaxing the conditions on the initial datum.) An analogue of \cite[Proposition 3.1]{Gr2007} can be also obtained by  adapting directly its proof to the present context.

The analysis of this paper is close to  the argument of  \cite{Gr2007}, but technically there are some differences.  Indeed, in Ref. \cite{Gr2007},    (Theorem 3.1(a))  was  obtained by a two-step demonstration. In the first step ("Step 1") the theorem was proved  for an initial datum  in  ${\mathcal D}_{+}(\Lambda^\infty)$ (in the setting of an AL-space)\footnote{${\mathcal D}_{+}(\Lambda^\infty)$  appears in \cite{Gr2007} as ${\mathcal D}^\infty_{+}(\Lambda)$. }. This was done by approximating the solution  of  \cite[Eq. (1.1)] {Gr2007} by a sequence similar to that defined by (\ref{42nou}), in the previous section, but keeping the initial datum fixed in ${\mathcal D}_{+}(\Lambda^\infty)$.   The purpose of  the second step ("Step 2")  was  to extend the result of "Step 1", by considering an  initial datum  $f_{0}$   $\in{\mathcal D}_{+}(\Lambda^2)$. Thus the solution of \cite[Eq. (1.1)]{Gr2007} was approximated by a sequence, denoted  $\{F_{i}\}$ in \cite{Gr2007}, of solutions of the same equation, but  corresponding to an increasing sequence of initial data in ${\mathcal D}_{+}(\Lambda^\infty)$,  converging to the original $f_{0}$. Then "Step 2" was concluded, based on the assertion that  $\{F_{i}\}$ is increasing. Unfortunately, the monotonicity of  $\{F_{i}\}$ has been erroneously justified in Ref. \cite{Gr2007}, so the proof of Theorem 3.1(a) is incomplete  therein. However, the error can be easily corrected by reconstructing  $\{F_{i}\}$ to approximate \cite[Eq. (3.15)]{Gr2007} instead of \cite[Eq. (1.1)]{Gr2007}, as was done in \cite{Gr2007} (see the   Appendix below).  

Finally, it should be emphasized that in the present note,
   the two-step proof of \cite[Theorem 3.1(a)]{Gr2007} has been reduced to a modified version of "Step 1" of that proof. This was done  by  introducing (\ref{42nou}) as  a diagonalization, in some sense, of the main approximation scheme used in \cite{Gr2007}.

\bigskip
\begin{center}
APPENDIX
\end{center}
\begin{center}
  \small{{\bf Corrigendum} to "A Nonlinear Evolution Equation in an Ordered Space, Arising from Kinetic Theory" [Commun. Contemp. Math.,  09 (2007) 217--251]}
\end{center}
Although Theorem 3.1 of Ref. \cite{Gru2007ap} is true as stated therein, the proof  of part (a) of the  theorem (\cite[Theorem 3.1(a)]{Gru2007ap}) is  incomplete, because of an  easily correctable error occurred in "Step 2" of that proof. Specifically, at page 235 in \cite{Gru2007ap},   it is asserted without demonstration (and, seemingly, incorrectly)  that the sequence $\{F_i\}$,  introduced at the same page, is {\it increasing}. Therefore, the assertion  cannot be  used  to justify a possible convergence of  $\{F_i\}$, as was done in \cite[p. 235]{Gru2007ap}.
Below, we  repair the  error, by redefining the sequence $\{F_i\}$, and modifying the argument of "Step 2" accordingly, to fill the gap in the proof (\cite[Theorem 3.1(a)]{Gru2007ap}).    To this end, we refer to paper \cite{Gru2007ap}, using the same notation as there,  and provide the following:

{\bf Step 2 of the proof of Theorem 3.1(a) in \cite{Gru2007ap}  p. 235 (corrected version)}: Let $f_0$ be as Theorem 3.1, i.e., $f_0\in {\mathcal D}(\Lambda ^2)\cap X_{+}$. By Lemma 2.1(b), there is increasing sequence $\{f_{0,i}\}_{i=1}^{\infty}$ $\subset$ 
${\mathcal D}^\infty_{+}(\Lambda)$, such that $f_{0,i}\nearrow f_{0}$   as $i\rightarrow \infty$.
Similarly to  (3.16), for each $i=1,2,...$, define 
\begin{align*}
\begin{aligned}
f_{1,i}(t) & =0,\quad f_{2, i}(t)=V^tf_{0,i}, \\
f_{n, i}(t) & = V^tf_{0,i}+\int_0^tV^{t-s}B(s,f_{n-1,i},f_{n-2, i})ds, \quad (t \geq 0); \quad
n=3,4,...,
\end{aligned}
\end{align*}
where  $\{V^t\}_{t\geq 0}$ is the positive $C_{0}$ semigroup with  infinitesimal generator $L=-a(\|\Lambda f_0\|)\Lambda$. Using  Lemma 4.1 in  the definition of $f_{n,i}$ one gets easily 
\[
f_{n,i}(t)\leq f_{m,j}(t) \quad (t\geq 0) \tag{$\star$}
\]
for all  $n,m,i,j=1,2,...$, such that $n\leq m $ and $i\leq j$.
Proceeding  as in Step 1 (to arrive at Eq. (4.25)), one finds that for each fixed $i$, the sequence $\{f_{n,i}\}_{n}$ is increasing, and $\exists$ $F_{i}:{\mathbb  R}_{+} \mapsto {\mathcal D}(\Lambda ^2)\cap X_{+}$ such that  $f_{n,i}(t)\nearrow {F}_{i}(t)$,  as $n\rightarrow \infty$, $ t\geq 0$, and  $\Lambda^k F_{i}$,  $Q^{\pm}(\cdot, F_{i})$, $\Lambda Q^{\pm}(\cdot, F_{i})$ $\in$ $L^1_{loc}({\mathbb  R}_{+};X_{+})$, $k=0,1,2$.
Moreover $F_{i}$ satisfies an equation of the form (4.25), with initial datum $f_{0,i}$,
\begin{align*}
\begin{aligned}
{F}_{i}(t) &  =f_{0,i}+\int_0^tQ(s,{F}_{i}(s))ds  \\
 + & \int_0^t\left[ a\left( \left\| \Lambda {F}_{i}(s)\right\| +
\int_0^s\Delta (\tau,
{F}_{i}(\tau))d\tau \right) -a(\left\| \Lambda f_0\right\| )\right] \Lambda
{F}_{i}(s)ds, \quad t \geq 0.
\end{aligned}
\tag{$\star \star$}
\end{align*}
Also, $\|\Lambda F_{i}(t)\| \leq \|\Lambda f_{0}\|$, and $\left\| \Lambda ^2F_i(t)\right\| \leq \exp (\rho (\left\| \Lambda
_1f_0\right\| )t)\left\| \Lambda ^2f_0\right\|$, $ (t\geq 0)$. Besides,  ($\star$) implies that  $\{F_i(t)\}_{i}$ is increasing for all $t\geq 0$. Since $X$ is monotone complete,   $\exists$ $f \in L^{1}_{loc}({\mathbb  R}_{+},X_{+})$ such that   $F_i(t) \nearrow f(t)$ as $i\rightarrow \infty$,     $\|\Lambda f(t)\| \leq \|\Lambda f_{0}\|$, and $\left\| \Lambda ^2f(t)\right\| \leq \exp (\rho (\left\| \Lambda
_1f_0\right\| )t)\left\| \Lambda ^2f_0\right\|$ for all $ t\geq 0$. 
Thus one can apply the dominated convergence theorem to ($\star\star$). One gets an equation for  $f$, as Eq. (4.25), but with  $f_0$  in ${\mathcal D}(\Lambda ^2)\cap X_{+}$, and not in ${\mathcal D}^\infty_{+}(\Lambda)$, as  assumed in Step 1. To conclude the existence part of the proof of Theorem 3.1(a), one  reasons  as in the last part of Step 1  (after Eq. (4.25)). 


\begin{thebibliography}{99}
\bibitem{Gr2007} C. P. Gr\"{u}nfeld, A Nonlinear Evolution Equation in an Ordered Space, Arising from Kinetic Theory, Commun. Contemp. Math., {\bf 09} (2007) 217--251.

\bibitem{Pov}   A. Ya. Povzner,
{The Boltzmann equation in the kinetic theory of gases},
{\it Mat. Sb. (N. S.)}   {\bf 58}(100)  (1962) 65--86.

\bibitem{Ar}  L. Arkeryd,
{On the Boltzmann equation {I} \& {II}},
{\it Arch. Ration. Mech. Anal.}  {\bf 45} (1972)  1--34.


\bibitem{Davies} E. B. Davies, Quantum theory of open systems, Academic Press, 1976.

\bibitem{Arl1} L. Arlotti, B. Lods and  M. Mokhtar-Kharroubi, On perturbed substochastic semigroups in abstract
state spaces, Z. Anal. Anwend. {\bf 30} (2011) 457--495 .

\bibitem{Arl2}	L. Arlotti, B. Lods and  M. Mokhtar-Kharroubi,  Non-autonomous Honesty theory in abstract state spaces with applications to linear kinetic equations Communications on Pure and Applied Analysis {\bf 13} {2} (2014) 729--771.


\bibitem{HiPh} E. Hille,  R. S. Phillips,
{\it Functional Analysis and Semi-Groups},
American Mathematical Society, Providence, 1974.

\bibitem{AnGr2}  E. De Angelis,  C. P. Gr\"{u}nfeld,
{The Cauchy problem for the generalized Boltzmann equation
	with dissipative collisions},
{\it Appl. Math. Lett.}  {\bf 14}  (2001)  941--947.

\bibitem{Pazy}	A. Pazy, {\it Semigroups of Linear Operators and Applications to Partial Differential Equations}, Applied  Mathematical  Sciences, Vol.  44,  Springer-Verlag,  New  York,  Berlin,  Heidelberg,  Tokyo,  1983.
\end{thebibliography}

\begin{thebibliography}{99}

\bibitem{Gru2007ap} C. P. Gr\"{u}nfeld, A Nonlinear Evolution Equation in an Ordered Space, Arising from Kinetic Theory, Commun. Contemp. Math., {\bf 09} (2007) 217--251.
\end{thebibliography}
\end{document}